\def\SYoung#1{\vbox{\smallskip\offinterlineskip
    \halign{&\vbox{##}\kern-\SThickness\cr #1}}}
\newdimen\SSquaresize \SSquaresize=4.5pt
\newdimen\SThickness \SThickness=.15pt
\newdimen\SCorrection \SCorrection=7pt
\def\SCarre#1{\hbox{\vrule width \SThickness
   \vbox to \SSquaresize{\hrule height \SThickness\vss
      \hbox to \SSquaresize{\hss$\scriptstyle#1$\hss}
   \vss\hrule height\SThickness}
   \unskip\vrule width \SThickness}
   \kern-\SThickness}
\makeatletter \@addtoreset{equation}{section}
\newtheorem{theorem}{Theorem}[section]
\newtheorem{proposition}[theorem]{Proposition}
\newtheorem{corollary}[theorem]{Corollary}
\newtheorem{conjecture}[theorem]{Conjecture}
\newtheorem{claim}[theorem]{Claim}
\title[Extended reverse ultra log-concavity of transposed Boros-Moll sequences]{The extended reverse ultra log-concavity of transposed Boros-Moll sequences}
\author[James J. Y. Zhao]{James Jing Yu Zhao}
       \address{School of Accounting, Guangzhou College of Technology and Business,
       Foshan 528138, P.R. China.}
       \email{zhao@gzgs.edu.cn}
\begin{document}

\begin{abstract}
The Boros-Moll sequences $\{d_\ell(m)\}_{\ell=0}^m$ arise in the study of evaluation of a quartic integral. After the infinite log-concavity conjecture of the sequence $\{d_\ell(m)\}_{\ell=0}^m$ was proposed by Boros and Moll, a lot of interesting inequalities on $d_\ell(m)$  were obtained, although the conjecture is still open. Since $d_\ell(m)$ has two parameters, it is natural to consider the properties for the sequences $\{d_\ell(m)\}_{m\ge \ell}$, which are called the \emph{transposed Boros-Moll sequences} here. In this paper, we mainly prove the extended reverse ultra log-concavity of the transposed Boros-Moll sequences $\{d_\ell(m)\}_{m\ge \ell}$, and hence give an upper bound for the ratio ${d_\ell^2(m)}/{(d_\ell(m-1)d_\ell(m+1))}$. A lower bound for this ratio is also established which implies a result stronger than the log-concavity of the sequences $\{d_\ell(m)\}_{m\ge \ell}$. As a consequence, we also show that the transposed Boros-Moll sequences possess a stronger log-concave property than the Boros-Moll sequences do. At last, we propose some conjectures on the Boros-Moll sequences and their transposes.
\end{abstract}

\subjclass{05A20, 11B83}

\keywords{Log-concavity, reverse ultra log-concavity, extended reverse ultra log-concavity, Boros-Moll sequences, transposed Boros-Moll sequences.}

\maketitle

\section{Introduction}

This paper is concerned with the extended reverse ultra log-concavity of the transposed Boros-Moll sequences $\{d_\ell(m)\}_{m\ge \ell}$.
Boros and Moll \cite{Boros-Moll-1999a, Boros-Moll-2001} investigated a quartic integral and provided a closed-form expression associated with a special class of Jacobi polynomials, that is,
$$
\int_0^\infty\frac{1}{(t^4+2xt^2+1)^{m+1}} dt
=\frac{\pi}{2^{m+3/2}(x+1)^{m+1/2}} P_m(x)
$$
for $x>-1$ and $m\in\mathbb{N}$, where the polynomial
\begin{align}\label{eq:B-M-poly1}
P_m(x)
=\sum_{j,k} \binom{2m+1}{2j}\binom{m-j}{k}\binom{2k+2j}{k+j}
 \frac{(x+1)^j(x-1)^k}{2^{3(k+j)}}.
\end{align}
By employing Ramanujan's Master Theorem, Boros and Moll proved that
\begin{align}\label{eq:B-M-poly2}
P_m(x)
=2^{-2m}\sum_{k=0}^m 2^k \binom{2m-2k}{m-k}\binom{m+k}{k} (x+1)^k,
\end{align}
which can be identified as the Jacobi polynomials $P_m^{(\alpha,\beta)}(x)$ with $\alpha=m+1/2$ and $\beta=-\alpha$, where
\begin{align*}
P_m^{(\alpha,\beta)}(x)
=\sum_{k=0}^m (-1)^{m-k} \binom{m+\beta}{m-k}\binom{m+k+\alpha+\beta}{k}
 \left(\frac{1+x}{2}\right)^k.
\end{align*}
Chen, Pang and Qu \cite{Chen-Pang-Qu} applied a combinatorial argument to show that the double sum \eqref{eq:B-M-poly1} can be reduced to the single sum \eqref{eq:B-M-poly2}.

The term $d_\ell(m)$ is the coefficient of $x^\ell$ in the polynomial $P_m(x)$, which is called the Boros-Moll polynomial, and the sequences $\{d_\ell(m)\}_{\ell=0}^m$ are called the Boros-Moll sequences. 
Clearly, one sees from \eqref{eq:B-M-poly2} that
\begin{align}\label{eq:B-M-seq}
d_\ell(m)=2^{-2m}\sum_{k=\ell}^m 2^k \binom{2m-2k}{m-k}\binom{m+k}{k}\binom{k}{\ell}
\end{align}
for $0\le \ell \le m$.
See \cite{Amdeberhan-Moll, Boros-Moll-1999b, Boros-Moll-1999c, Boros-Moll-2004, Moll2002} for more background on these sequences.

A sequence $\{a_i\}_{i\ge 0}$ with real numbers is said to be \emph{log-concave} if for any $i\ge 1$,
\begin{align}\label{eq:df-log-concave}
a_i^2-a_{i-1}a_{i+1}\ge 0.
\end{align}
If the inequality sign of \eqref{eq:df-log-concave} reverses, then the sequence $\{a_i\}_{i\ge 0}$ is called \emph{log-convex}. A polynomial is said to be log-concave if its coefficient sequence is log-concave, see Brenti \cite{Brenti} and Stanley \cite{Stanley}.

Boros and Moll \cite{Boros-Moll-1999b} showed that the sequence $\{d_\ell(m)\}_{\ell=0}^m$ is unimodal with the maximum term located in the middle, see also \cite{AMABKMR, Boros-Moll-1999c}.
Moll \cite{Moll2002} further conjectured that the sequences $\{d_\ell(m)\}_{\ell=0}^m$ are log-concave, which was proved by Kauers and Paule \cite{Kauers-Paule} with a computer algebra method. Chen {\it et al}. \cite{Chen-etal2012}  also gave a combinatorial proof for this conjecture by building a structure of partially $2$-colored permutations.

A sequence $\{a_i\}_{i=0}^n$ is called ultra log-concave if $\{a_i/\binom{n}{i}\}_{i=0}^n$ is log-concave, that is,
\begin{align}\label{def:ulcc}
 \frac{a_i^2}{\binom{n}{i}^2}
\ge \frac{a_{i-1}}{\binom{n}{i-1}}\cdot \frac{a_{i+1}}{\binom{n}{i+1}},
\end{align}
see Liggett \cite{Liggett}.
Clearly, the inequality \eqref{def:ulcc} implies
\begin{align*}
i(n-i)a_i^2-(n-i+1)(i+1)a_{i-1}a_{i+1}\ge 0,
\end{align*}
which is stronger than \eqref{eq:df-log-concave}. It is well-known that the coefficients of a realrooted polynomial form an ultra log-concave sequence. Liggett \cite{Liggett} also mentioned that the ultra log-concavity of a sequence $\{a_i\}_{i=0}^n$ implies the log-concavity of the sequence $\{i!a_i\}_{i=0}^n$.

A sequence $\{a_i\}_{i=0}^n$ is called reverse ultra log-concave if the reverse relation in \eqref{def:ulcc} holds. 
For instance, Han and Seo \cite{HanSeo} showed the log-concavity and reverse ultra log-concavity of the Bessel polynomial
\begin{align*}
B_n(x)=\sum_{k=0}^n \frac{(n+k)!}{2^k k! (n-k)!} x^k.
\end{align*}
Moreover, Chen and Gu \cite[Theorems 1.1 \& 1.2]{Chen-Gu2009} proved that for $m\ge 2$ and $1\le \ell \le m-1$,
\begin{align}\label{eq:ub-mell2}
 \frac{d_\ell^2(m)}{d_{\ell-1}(m)d_{\ell+1}(m)}
<\frac{(m-\ell+1)(\ell+1)}{(m-\ell)\ell},
\end{align}
and
\begin{align}\label{eq:lb-mell2}
 \frac{d_\ell^2(m)}{d_{\ell-1}(m)d_{\ell+1}(m)}
>\frac{(m-\ell+1)(\ell+1)(m+\ell)}{(m-\ell)\ell(m+\ell+1)}.
\end{align}
Clearly, the inequality \eqref{eq:ub-mell2} implies the reverse ultra log-concavity of the Boros-Moll sequences. And \eqref{eq:lb-mell2} is stronger than the log-concavity of the Boros-Moll sequences.
Their results suggest that, in the asymptotic sense, the Boros-Moll sequences are just on the borderline between ultra log-concavity and reverse ultra log-concavity.

The Boros-Moll sequences $\{d_\ell(m)\}_{\ell=0}^m$ satisfy many other interesting inequalities.
For instance, Chen and Xia \cite[Theorem 1.1]{Chen-Xia2009} showed that the Boros-Moll polynomials possess the strictly ratio monotone property, which implies both log-concavity and the spiral property.
Chen, Wang and Xia \cite{Chen-Wang-Xia} introduced the notion of interlacing log-concavity of a sequence of polynomials with positive coefficients which is stronger than the log-concavity of the polynomials themselves, and showed the interlacing log-concavity of $\{P_m(x)\}_{m\ge 0}$.

For a sequence $\{a_i\}_{i\ge 0}$ of real numbers, define an operator $\mathcal{L}$ by $\mathcal{L}(\{a_i\}_{i\ge 0})=\{b_i\}_{i\ge 0}$, where $b_i=a_i^2-a_{i-1}a_{i+1}$ for $i\ge 0$, with the convention that $a_{-1}=0$. Boros and Moll \cite{Boros-Moll-2004} introduced the notion of infinite log-concavity.
A sequence $\{a_i\}_{i\ge 0}$ is said to be $k$-log-concave if the sequence $\mathcal{L}^j\left(\{a_i\}_{i\ge 0}\right)$ is nonnegative for each $1\le j\le k$, and $\{a_i\}_{i\ge 0}$ is said to be $\infty$-log-concave if $\mathcal{L}^k\left(\{a_i\}_{i\ge 0}\right)$ is nonnegative for any $k\ge 1$. The following conjecture was proposed by
Boros and Moll and is still open.
\begin{conjecture}\cite{Boros-Moll-2004}\label{conj:B-M-inflv}
The Boros-Moll sequence $\{d_\ell(m)\}_{\ell=0}^m$ is $\infty$-log-concave.
\end{conjecture}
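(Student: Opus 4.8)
The plan is to produce a \emph{self-reproducing} inequality, that is, a condition $\mathcal{C}$ on positive sequences that (a) is satisfied by $\{d_\ell(m)\}_{\ell=0}^m$ and (b) is preserved by the operator $\mathcal{L}$, so that $\mathcal{C}$ holds for every iterate $\mathcal{L}^k(\{d_\ell(m)\}_{\ell=0}^m)$ and in particular forces nonnegativity at each stage. The cleanest candidate for $\mathcal{C}$ is $r$-log-concavity in the sense of McNamara and Sagan: a positive sequence $\{a_i\}$ satisfying $a_i^2\ge r\,a_{i-1}a_{i+1}$ for the constant $r=(3+\sqrt{5})/2$ obeys the same inequality after one application of $\mathcal{L}$, and hence (by induction, since $r>1$ keeps all iterates strictly positive) is $\infty$-log-concave. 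So the first thing I would check is whether $\{d_\ell(m)\}_{\ell=0}^m$ is $r$-log-concave for this universal constant.

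The immediate obstruction, and the reason the conjecture is hard, is that this fails. Combining the upper bound \eqref{eq:ub-mell2} with the lower bound \eqref{eq:lb-mell2}, the ratio $d_\ell^2(m)/(d_{\ell-1}(m)d_{\ell+1}(m))$ is squeezed between $\frac{(m-\ell+1)(\ell+1)(m+\ell)}{(m-\ell)\ell(m+\ell+1)}$ and $\frac{(m-\ell+1)(\ell+1)}{(m-\ell)\ell}$; for $\ell$ near $m/2$ both sides behave like $1+O(1/m)$. Thus the sequence lies only asymptotically on the boundary of log-concavity, no uniform factor $r>1$ is available, and the clean preservation lemma cannot be invoked. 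Any workable condition $\mathcal{C}$ must therefore be \emph{position dependent}, permitting the admissible factor to degrade to $1$ in the middle while remaining strong enough on the flanks.

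Concretely, I would proceed as follows. First, using the known three-term recurrences for $d_\ell(m)$ together with \eqref{eq:ub-mell2}--\eqref{eq:lb-mell2}, I would derive sharp two-sided estimates for the first difference sequence $b_\ell^{(1)}=d_\ell^2(m)-d_{\ell-1}(m)d_{\ell+1}(m)$, written in the normalized form $b_\ell^{(1)}=d_{\ell-1}(m)d_{\ell+1}(m)(\rho_\ell-1)$ with $\rho_\ell$ the ratio above. Next I would try to isolate an explicit family of inequalities $\rho_\ell^{(k)}\ge f_k(\ell,m)$ for the ratios attached to $\mathcal{L}^k$, with $f_k$ a rational function tending to $1$ at the expected rate, and close the induction on $k$ by showing that $f_{k+1}$ follows from $f_k$ after one application of $\mathcal{L}$. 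A parallel line worth pursuing is the analytic one: since $P_m(x)$ is a Jacobi polynomial $P_m^{(m+1/2,-m-1/2)}$, interlacing and real-rootedness properties within the Jacobi family might supply a stronger structural input than the raw ratio bounds.

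The main obstacle is exactly the borderline behavior isolated above: near the peak the log-concavity gaps $b_\ell^{(k)}$ are vanishingly small relative to the terms, so proving that they stay nonnegative after each further application of $\mathcal{L}$ demands asymptotically exact control rather than a crude multiplicative margin. I expect that closing the induction will require either a new preservation lemma calibrated to sequences whose consecutive ratios approach $1$ at rate $O(1/m)$, or exact hypergeometric identities for the iterated sequences $\mathcal{L}^k(\{d_\ell(m)\}_{\ell=0}^m)$; it is precisely the absence of such a tool that keeps Conjecture \ref{conj:B-M-inflv} open.
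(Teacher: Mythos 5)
The statement you were asked to prove is not a theorem of the paper at all: it is Boros and Moll's $\infty$-log-concavity conjecture, which the paper explicitly states is still open and for which no proof exists there or anywhere in the literature. Your submission is consistent with this --- it is a research program, not a proof, and your own final sentence concedes that the decisive tool is missing and that the conjecture remains open. Judged as a proof attempt, then, the gap is total: no step of your outline establishes nonnegativity of $\mathcal{L}^k\left(\{d_\ell(m)\}_{\ell=0}^m\right)$ for any $k$ beyond what is already known, and the proposed induction on $k$ (position-dependent bounds $f_k(\ell,m)$ with $f_{k+1}$ deducible from $f_k$ after one application of $\mathcal{L}$) is never formulated precisely, let alone closed; formulating and closing it is exactly the open problem.

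That said, your diagnosis of the obstruction is accurate and matches the paper's own discussion. The preservation lemma for uniform $r$-log-concavity with $r=(3+\sqrt{5})/2$ is indeed unavailable here: by Chen and Gu's bounds \eqref{eq:ub-mell2} and \eqref{eq:lb-mell2}, the ratios $d_\ell^2(m)/(d_{\ell-1}(m)d_{\ell+1}(m))$ are squeezed to $1+O(1/m)$ for $\ell$ near $m/2$, so the Boros--Moll sequences sit on the borderline between ultra and reverse ultra log-concavity and admit no uniform multiplicative margin $r>1$. The partial results you would be building on are exactly those the paper records: $2$-log-concavity by Chen and Xia via recurrences, and $2$- and $3$-log-concavity via Br\"{a}nd\'{e}n's real-rootedness conjectures confirmed by Chen, Dou and Yang --- your suggested analytic route through the Jacobi-polynomial structure is essentially Br\"{a}nd\'{e}n's approach, not a new one. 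In short: your write-up is an honest, well-informed account of why the conjecture is hard, but it neither is, nor claims to be, a proof, and it should not be presented as one.
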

Br\"{a}nd\'{e}n \cite{Branden} provided an approach to Conjecture \ref{conj:B-M-inflv} by relating real-rooted polynomials to higher-order log-concavity. Although, as shown by Boros and Moll \cite{Boros-Moll-1999b}, the polynomials $P_m(x)$ are not real-rooted in general, Br\"{a}nd\'{e}n introduced two polynomials derived from $P_m(x)$ and conjectured the real-rootedness of them \cite[Conjectures 8.5 \& 8.6]{Branden}, which have been confirmed by Chen, Dou and Yang \cite{Chen-Dou-Yang}, and hence the $2$-log-concavity and the $3$-log-concavity of the BorosMoll polynomials were obtained.
In another direction, Chen and Xia \cite{Chen-Xia2} showed a proof of the $2$-log-concavity of the Boros-Moll sequences by using the approach of recurrence relations.

Guo \cite{Guo2022} proved the higher order Tur\'{a}n inequalities of the Boros-Moll sequences by showing an equivalent form \cite[Eq. (9)]{Guo2022} established in \cite{Guo2021}. Zhao \cite{Zhao2023} gave a simple proof of these higher order Tur\'{a}n inequalities by employing a sufficient condition built by Hou and Li \cite[Theorem 5.2]{Hou-Li2021}, together with a set of sharp enough bounds of $d_\ell^2(m)/(d_{\ell-1}(m)d_{\ell+1}(m))$ given in \eqref{eq:ub-mell2} and  \cite[Theorem 3.1]{Zhao2023}.

Since $d_\ell(m)$ has two parameters, it is natural to investigate properties for the sequences $\{d_\ell(m)\}_{m\ge \ell}$, which are called the \emph{transposed Boros-Moll sequences} in this paper.

The sequences $\{d_\ell(m)\}_{m\ge \ell}$ were proved to be log-convex for $\ell=0$, log-concave for $\ell\ge 1$ and $2$-log-concave for $\ell\ge 2$ by Jiang and Wang \cite{Jiang-Wang2024}. The higher order Tur\'{a}n inequalities for the sequences $\{d_\ell(m)\}_{m\ge \ell}$ were  also derived for $\ell\ge 2$ in \cite{Jiang-Wang2024}.

Recently, Zhang and Zhao \cite{Zhang-Zhao} showed that the Boros-Moll sequences $\{d_\ell(m)\}_{\ell=0}^m$, its normalizations $\{d_\ell(m)/\ell!\}_{\ell=0}^m$, and its transposes $\{d_\ell(m)\}_{m\ge \ell}$ satisfy the Briggs inequality, which arising from Briggs' conjecture that if a polynomial $a_0+a_1x+\cdots+a_nx^n$ with real coefficients has only negative zeros, then
$a^2_k(a^2_k - a_{k-1}a_{k+1}) > a^2_{k-1}(a^2_{k+1} - a_ka_{k+2})$
for any $1\leq k\leq n-1$. In order to prove the Briggs inequality for the sequence $\{d_\ell(m)\}_{m\ge \ell}$, they established the strict ratio-log-convexity of  $\{d_\ell(m)\}_{m\ge \ell}$ for $\ell\ge 1$. As a consequence, the strict log-convexity of the sequence $\{\sqrt[n]{d_\ell(\ell+n)}\}_{n\ge 1}$ for $\ell\ge 1$ was also obtained.

In this paper, we mainly show that the transposed Boros-Moll sequences $\{d_\ell(m)\}_{m\ge \ell}$ possess the extended reverse ultra log-concavity property.
A sequence $\{a_i\}_{i\ge k}$ is called extended ultra log-concave if $\{a_i/\binom{i}{k}\}_{i\ge k}$ is log-concave, and the extended reverse ultra log-concavity of the sequence $\{a_i\}_{i\ge k}$ is defined in a similar way of the reverse ultra log-concavity.

The remainder of this paper is organized as follows. In Section \ref{Sec:Recu}, we first recall some known recurrence relations for $d_\ell(m)$ which will be applied in our proofs. In Section \ref{Sec:erulcv}, we first prove the extended reverse ultra log-concavity of the transposed Boros-Moll sequences $\{d_\ell(m)\}_{m\ge \ell}$, and hence give an upper bound for the ratio $d_\ell^2(m)/(d_\ell(m-1)d_\ell(m+1))$. We further establish a lower bound for this ratio in Theorem \ref{thm:lbdlm2}, which implies an inequality stronger than the log-concavity of the sequences $\{d_\ell(m)\}_{m\ge \ell}$.
As will be seen, the upper and lower bounds for $d_\ell^2(m)/(d_\ell(m-1)d_\ell(m+1))$ are very close to each other, it may be said that, in the asymptotic sense, the sequences $\{d_\ell(m)\}_{m\ge \ell}$ are just on the borderline between extended ultra log-concavity and extended reverse ultra log-concavity for any $\ell\ge 1$. Finally, we propose some conjectures on the Boros-Moll sequences and their transposes in Section \ref{Sec:Conj}.

\section{The recurrences}\label{Sec:Recu}

Kauers and Paule \cite{Kauers-Paule} used a computer algebra system to derive the following recurrence relations for $d_\ell(m)$, which will be employed in our proofs.
For $\ell\ge 0$ and $m\ge \ell$,
\begin{align}
 4(m^2+m)(m+1-\ell)d_\ell(m+1)
=&\ 2m(8m^2+8m-4\ell^2+3)d_\ell(m)\label{eq:rec-dlm-m} \\
 &\quad -(16m^2-1)(m+\ell)d_\ell(m-1), \nonumber \\
 (m+2-\ell)(m+\ell-1)d_{\ell-2}(m)
=&\ (2m+1)(\ell-1)d_{\ell-1}(m)
  -\ell(\ell-1)d_\ell(m),\label{eq:rec-i-ho}\\
 2(m+1)d_\ell(m+1)
=&\ 2(m+\ell)d_{\ell-1}(m)+(4m+2\ell+3)d_\ell(m),\label{eq:reclm1}\\
 2(m+1)(m+1-\ell)d_\ell(m+1)
=&\ (4m-2\ell+3)(m+\ell+1)d_{\ell}(m)\label{eq:reclm2}\\
 &\quad -2\ell(\ell+1)d_{\ell+1}(m).\nonumber
\end{align}

It should be mentioned that Moll \cite{Moll2007} independently derived the relations \eqref{eq:rec-dlm-m} and \eqref{eq:rec-i-ho} via the WZ-method \cite{WZ}. As remarked by Chen and Xia \cite[Sec. 2]{Chen-Xia2009}, the recursions \eqref{eq:reclm1} and \eqref{eq:reclm2} can be easily deduced from \eqref{eq:rec-dlm-m} and \eqref{eq:rec-i-ho}, moreover, \eqref{eq:rec-dlm-m} and \eqref{eq:rec-i-ho} can be also derived from \eqref{eq:reclm1} and \eqref{eq:reclm2}.

\section{The main results}\label{Sec:erulcv}

The objective of this section is to show the main result of this paper, the extended reverse ultra log-concavity of the transposed Boros-Moll sequences $\{d_\ell(m)\}_{m\ge \ell}$.

\begin{theorem}\label{thm:rulcm}
For each $\ell\ge 0$, the transposed Boros-Moll sequence $\{d_\ell(m)\}_{m\ge \ell}$ is strictly extended reverse ultra log-concave.
That is, for each $\ell\ge 1$ and $m\ge \ell+1$, we have
\begin{align}\label{ineq:rulcm}
\left(\frac{d_\ell(m)}{\binom{m}{\ell}}\right)^2
< \left(\frac{d_\ell(m-1)}{\binom{m-1}{\ell}}\right)\cdot
  \left(\frac{d_\ell(m+1)}{\binom{m+1}{\ell}}\right)
\end{align}
or, equivalently,
\begin{align}\label{ineq:ubdlm2}
\frac{d_\ell^2(m)}{d_\ell(m-1)d_\ell(m+1)}
< \frac{(m-\ell+1)m}{(m-\ell)(m+1)}.
\end{align}
\end{theorem}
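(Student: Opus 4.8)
The plan is to use the three-term recurrence \eqref{eq:rec-dlm-m} to eliminate $d_\ell(m+1)$ from \eqref{ineq:ubdlm2} and reduce the whole statement to a single quadratic inequality in the consecutive ratio $R_m:=d_\ell(m)/d_\ell(m-1)$. Concretely, clearing denominators in \eqref{ineq:ubdlm2} (all factors are positive since $m>\ell$), substituting
\[4m(m+1)(m+1-\ell)\,d_\ell(m+1)=2m(8m^2+8m-4\ell^2+3)d_\ell(m)-(16m^2-1)(m+\ell)d_\ell(m-1),\]
and dividing by $d_\ell^2(m-1)$, the target inequality \eqref{ineq:ubdlm2} becomes exactly
\[Q(R_m):=4(m+1)^2(m-\ell)R_m^2-2m(8m^2+8m-4\ell^2+3)R_m+(16m^2-1)(m+\ell)<0.\]
Since the leading coefficient $4(m+1)^2(m-\ell)$ is positive, $Q$ is an upward parabola, so it suffices to locate $R_m$ strictly between its two roots. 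By convexity it is in fact enough to produce explicit bounds $L(m)\le R_m\le U(m)$ for which $Q(L(m))<0$ and $Q(U(m))<0$, since then $Q<0$ on the whole interval $[L(m),U(m)]\ni R_m$.

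The lower bound is immediate from \eqref{eq:reclm1}. Applying that relation with $m$ replaced by $m-1$ and discarding the positive term $2(m+\ell-1)d_{\ell-1}(m-1)$ gives $2m\,d_\ell(m)>(4m+2\ell-1)d_\ell(m-1)$, i.e.
\[R_m>\frac{4m+2\ell-1}{2m}=:L(m)\qquad(\ell\ge1),\]
and one checks that $Q(L(m))<0$ for all $m\ge\ell+1$, a routine polynomial verification. The symmetric idea applied to \eqref{eq:reclm2} (using $m\mapsto m-1$ and discarding the positive term $2\ell(\ell+1)d_{\ell+1}(m-1)$) does yield an upper bound $R_m<\tfrac{(4m-2\ell-1)(m+\ell)}{2m(m-\ell)}$, but this crude estimate is far too weak: it is sharp only at $m=\ell+1$ (where $d_{\ell+1}(\ell)=0$), and for $m\ge\ell+2$ it overshoots the larger root of $Q$, so $Q$ is positive there and it cannot close the argument. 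Thus the real work lies in sharpening the upper bound on $R_m$.

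To sharpen it I would induct on $m$ using \eqref{eq:rec-dlm-m} itself. Shifting that recurrence down by one index expresses $R_m$ as $R_m=g_{m-1}(R_{m-1})$, where $g_{m-1}(r)=A_{m-1}-B_{m-1}/r$ with $A_{m-1},B_{m-1}>0$; in particular $g_{m-1}$ is strictly increasing, so any upper bound on $R_{m-1}$ propagates to an upper bound on $R_m$. Starting from a few directly computed values of $R_m$ as base cases and iterating this monotone map contracts the estimate toward the true ratio. The aim is a clean explicit $U(m)$ satisfying the implication $R_{m-1}<U(m-1)\Rightarrow g_{m-1}(U(m-1))\le U(m)$ together with $Q(U(m))<0$. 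Combined with $L(m)$ and the convexity of $Q$, this yields $Q(R_m)<0$ and hence \eqref{ineq:ubdlm2}, which is equivalent to \eqref{ineq:rulcm}.

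The hard part will be the sharpness demanded of $U(m)$. Because the recurrence forces $R_m\to2$ as $m\to\infty$, with $L(m)$, $U(m)$, and both roots of $Q$ all collapsing to $2$, the upper bound must agree with $R_m$ to high order in $1/m$; an estimate that is already off at first order in $1/m$ fails, which is precisely why the one-step bound from \eqref{eq:reclm2} is insufficient. Producing a rational $U(m)$ that is simultaneously provably above $R_m$, preserved by the monotone map $g_{m-1}$, and below the larger root of $Q$, and then discharging the resulting inductive step together with the inequalities $Q(L(m))<0$ and $Q(U(m))<0$ as explicit polynomial positivity statements in $m$ and $\ell$ (for $\ell\ge1$, $m\ge\ell+1$), is the computational heart of the proof. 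The boundary case $\ell=0$ is handled separately: there the right-hand side of \eqref{ineq:ubdlm2} equals $1$, so the claim reduces to the already known log-convexity of $\{d_0(m)\}_{m\ge0}$.
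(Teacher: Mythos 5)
Your reduction is correct, and in fact it is dual to the paper's: substituting \eqref{eq:rec-dlm-m} into \eqref{ineq:ubdlm2} gives exactly your quadratic $Q$, whose discriminant works out to $4\Delta_1$ with $\Delta_1$ as in \eqref{def:Delta_1}, so your $Q$ in the variable $R_m$ and the paper's quadratic in $d_{\ell+1}(m)/d_\ell(m)$ are linked by the linear substitution \eqref{eq:reclm2}; the convexity logic is also sound. The fatal problem is the one step you dismiss as routine: the claim that $Q(L(m))<0$ for $L(m)=\frac{4m+2\ell-1}{2m}$ is \emph{false} for every $\ell\ge 4$ once $m$ is moderately large. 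Expanding the two roots of $Q$ gives
\begin{align*}
2+\frac{4(\ell-1)\mp\sqrt{13}}{2m}+O(m^{-2}),
\end{align*}
while $L(m)=2+\frac{2\ell-1}{2m}$; since $2\ell-1<4(\ell-1)-\sqrt{13}$ precisely when $\ell>(3+\sqrt{13})/2\approx 3.30$, for $\ell\ge 4$ your lower bound drops \emph{below the smaller root}, where the upward parabola $Q$ is positive. Concretely, at $\ell=4$, $m=20$ one has $L=\tfrac{87}{40}=2.175$, the roots of $Q$ are $\approx 2.184$ and $\approx 2.492$, and
$1600\,Q\!\left(\tfrac{87}{40}\right)=213627456-459220800+245721600=128256>0$.
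The cause is structural: discarding $d_{\ell-1}(m-1)$ in \eqref{eq:reclm1} captures only $(\ell-\tfrac12)/m$ of the first-order term, whereas the true coefficient is about $2\ell-\tfrac12$ (Zhao's bound \eqref{ineq:lb-dlm1m} already shows $R_{m+1}-2\ge(2\ell-\tfrac12)/m+O(m^{-2})$). This is exactly the defect you correctly diagnosed in the crude upper bound from \eqref{eq:reclm2}; it afflicts the lower bound as well, only its failure is postponed to $\ell\ge 4$, which is presumably why small cases did not reveal it.

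Beyond this, the part you yourself call the computational heart --- an explicit $U(m)$ lying above $R_m$, preserved by the monotone map $g_{m-1}$, and below the larger root --- is never constructed, so even where the proposal is not wrong it is a program rather than a proof, and by the previous paragraph the program also needs a sharper replacement for $L(m)$. It is instructive to compare with the paper, which faces the same ``both bounds must be sharp to first order in $1/m$'' obstruction and evades it by changing variables: using \eqref{eq:rec-dlm-m} together with \eqref{eq:reclm2}, it rewrites \eqref{ineq:ubdlm2} as a quadratic in the $\ell$-direction ratio $d_{\ell+1}(m)/d_\ell(m)$. In that variable one side is cheap --- Chen and Xia's known bound $d_{\ell+1}(m)/d_\ell(m)<(m-\ell)/(\ell+1)$ already lies below the larger root $x_2$ --- so all of the sharpness is concentrated in the single inequality $d_{\ell+1}(m)/d_\ell(m)>x_1=W(\ell,m)$, which the paper isolates as Theorem \ref{thm:dl1mm>} and proves by induction on $m$. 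If you want to keep the $m$-direction framework, the natural repair is to import genuinely sharp ratio estimates: \eqref{ineq:lb-dlm1m} (which the paper uses to prove Theorem \ref{thm:lbdlm2}) has the right first-order behavior and could serve as the lower endpoint, but you would still need to manufacture and prove an upper bound of matching precision --- and that is exactly the content that is missing.
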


We further establish a lower bound for $d_\ell^2(m)/(d_\ell(m-1)d_\ell(m+1))$, which implies an inequality stronger than the log-concavity of the transposed Boros-Moll sequences $\{d_\ell(m)\}_{m\ge \ell}$.
\begin{theorem}\label{thm:lbdlm2}
For each $\ell\ge 0$ and $m\ge \ell+1$, we have
\begin{align}\label{ineq:lbdlm2}
\frac{d_\ell^2(m)}{d_\ell(m-1)d_\ell(m+1)}
> \frac{(m-\ell+1)m^3}{(m-\ell)(m+1)(m^2+1)}.
\end{align}
\end{theorem}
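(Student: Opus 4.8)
The plan is to convert \eqref{ineq:lbdlm2} into a statement about the single consecutive ratio $r_m := d_\ell(m)/d_\ell(m-1)$ and then to bound that ratio. Writing $A = 4m(m+1)(m+1-\ell)$, $B = 2m(8m^2+8m-4\ell^2+3)$ and $C = (16m^2-1)(m+\ell)$, the recurrence \eqref{eq:rec-dlm-m} reads $A\,d_\ell(m+1) = B\,d_\ell(m) - C\,d_\ell(m-1)$, so dividing by $d_\ell(m)$ gives $d_\ell(m+1)/d_\ell(m) = (B r_m - C)/(A r_m)$ and hence
\begin{align*}
\frac{d_\ell^2(m)}{d_\ell(m-1)d_\ell(m+1)} = \frac{A r_m^2}{B r_m - C} =: R(r_m).
\end{align*}
Since $d_\ell(m+1)>0$ we have $B r_m - C>0$, and differentiation gives $R'(r) = A r (Br-2C)/(Br-C)^2$, so $R$ is strictly increasing for $r > 2C/B$. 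Consequently, to prove \eqref{ineq:lbdlm2} it suffices to exhibit a rational function $\rho(m,\ell)$ with (i) $r_m > \rho(m,\ell)$, (ii) $\rho(m,\ell) \ge 2C/B$, and (iii) $A\rho^2 \ge L_m(B\rho - C)$, where $L_m$ denotes the right-hand side of \eqref{ineq:lbdlm2}; indeed (ii)--(iii) then yield $R(r_m) > R(\rho) \ge L_m$ by monotonicity. This is the same reduction that underlies Theorem \ref{thm:rulcm}, except that \eqref{ineq:ubdlm2} requires an \emph{upper} bound on $r_m$, whereas here we need the complementary lower bound.

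The core step is to prove (i) by induction on $m$. The recurrence exhibits $r_{m+1}$ as a function of $r_m$, namely $r_{m+1} = \Phi_m(r_m)$ with $\Phi_m(r) = B/A - C/(Ar)$, and since $A,C>0$ the map $\Phi_m$ is strictly increasing on $r>0$. Thus, assuming $r_m > \rho(m,\ell)$, it is enough to verify the scalar inequality $\Phi_m(\rho(m,\ell)) \ge \rho(m+1,\ell)$ to conclude $r_{m+1} > \rho(m+1,\ell)$; the induction then propagates from the base case $m=\ell+1$, where $d_\ell(\ell)$ and $d_\ell(\ell+1)$ are read off from \eqref{eq:B-M-seq}. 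The structurally different case $\ell=0$, where $\{d_0(m)\}$ is log-convex and $r_m$ increases to $2$, is handled separately: one checks that the discriminant of $A r^2 - L_m(Br - C)$ is negative, so \eqref{ineq:lbdlm2} holds unconditionally for every admissible value of $r_m$, and no ratio bound is needed.

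Once a suitable $\rho$ is fixed, everything reduces to polynomial positivity in the two variables $m$ and $\ell$. After clearing the (positive) denominators, the propagation inequality $\Phi_m(\rho(m,\ell)) \ge \rho(m+1,\ell)$ and the sufficiency conditions (ii) and (iii) all become assertions that explicit polynomials in $m,\ell$ are nonnegative on the range $m \ge \ell+1$, $\ell \ge 1$. These can be certified in the standard fashion, e.g. by re-expanding in the shifted variables $s = m-\ell \ge 1$ and $\ell \ge 1$ and checking that the resulting coefficients are nonnegative.

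I expect the construction of $\rho$ to be the main obstacle. The difficulty is that, as $m\to\infty$, the exact ratio $r_m$ and the threshold $r$ solving $R(r)=L_m$ both tend to $2$, and their gap is only of order $m^{-2}$; hence $\rho$ must match the asymptotics of $r_m$ to high precision to satisfy (iii), while remaining simple enough to satisfy the self-propagation inequality $\Phi_m(\rho(m,\ell)) \ge \rho(m+1,\ell)$. Balancing these competing demands of sharpness and inductive stability is the delicate part; the accompanying polynomial verifications, though of high degree, are then routine.
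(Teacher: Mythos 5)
Your reduction is sound as far as it goes: eliminating one term of \eqref{eq:rec-dlm-m} turns \eqref{ineq:lbdlm2} into a quadratic inequality in a consecutive ratio (the paper does exactly this, phrased as a quadratic in $d_\ell(m+1)/d_\ell(m)$ rather than in $r_m=d_\ell(m)/d_\ell(m-1)$), your monotonicity bookkeeping for $R$ is correct, and your treatment of $\ell=0$ via a negative discriminant coincides with the paper's. The genuine gap is that for $\ell\ge 1$ the proof never materializes: everything is conditioned on a rational function $\rho(m,\ell)$ satisfying (i) $r_m>\rho$, (ii) $\rho\ge 2C/B$, (iii) $A\rho^2\ge L_m(B\rho-C)$, together with the propagation inequality $\Phi_m(\rho(m,\ell))\ge \rho(m+1,\ell)$, and $\rho$ is never exhibited, nor is a single one of these conditions checked for any candidate. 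You concede this construction is ``the main obstacle,'' but that obstacle \emph{is} the theorem; deferring it leaves a strategy outline, not a proof. It is also telling that the sharp lower bound the paper ultimately uses, namely $U(\ell,m)$ in \eqref{eq:def-Lml} quoted from \cite[Theorem 2.1]{Zhao2023}, is not rational --- it contains $\sqrt{\lambda_\ell(m)}$ --- which is at least circumstantial evidence that a rational $\rho$ meeting your simultaneous sharpness and inductive-stability demands may be genuinely hard to find.

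You also miss a structural fact that the paper exploits and that reshapes the difficulty. For $\ell\ge 1$ the discriminant of the quadratic equals a positive factor times $f_\ell(m)$ in \eqref{defi:fellm}, and by Descartes' rule of signs $f_\ell$ has exactly two positive roots $r_1(\ell)<1<r_2(\ell)$; hence $f_\ell(m)<0$ for every $m>\lfloor r_2(\ell)\rfloor$, and there \eqref{ineq:lbdlm2} holds for \emph{every} admissible value of the ratio, with no bound on $r_m$ needed at all. Consequently your stated core difficulty --- that as $m\to\infty$ the threshold solving $R(r)=L_m$ approaches $r_m$ to within $O(m^{-2})$ --- is vacuous: for $m>r_2(\ell)$ no such threshold exists. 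A ratio bound is needed only on the finite window $\ell+1\le m\le \lfloor r_2(\ell)\rfloor$ (which is nonempty, since $f_\ell(\ell+1)>0$, so some work is unavoidable), and the paper closes it by invoking the known bound $d_\ell(m+1)/d_\ell(m)>U(\ell,m)$ of \eqref{ineq:lb-dlm1m} and verifying $U(\ell,m)\ge y_\ell(m)$ by explicit polynomial algebra, with one further Descartes argument. To salvage your induction plan you would still have to produce $\rho$ sharp enough on that window uniformly in $\ell$ (note $r_2(\ell)$ grows on the order of $\ell^2$, so the window is long for large $\ell$); that remains precisely the unperformed work.
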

It is easily checked that for $\ell\ge 2$ and $m\ge \ell+1$,
\begin{align}\label{ineq:1.10}
\frac{(m-\ell+1)m^3}{(m-\ell)(m+1)(m^2+1)}>\frac{m^2+1}{m^2}.
\end{align}
Consequently, we obtain the following relation from Theorem \ref{thm:lbdlm2}.
\begin{corollary}\label{coro:LcEtBMs}
For each $\ell\ge 2$ and $m\ge \ell+1$, we have
\begin{align*}
\frac{d_\ell^2(m)}{d_\ell(m-1)d_\ell(m+1)}>\frac{m^2+1}{m^2}.
\end{align*}
\end{corollary}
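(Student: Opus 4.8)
The plan is to obtain Corollary~\ref{coro:LcEtBMs} purely by transitivity, without any new analytic input: Theorem~\ref{thm:lbdlm2} already supplies the lower bound $\frac{(m-\ell+1)m^3}{(m-\ell)(m+1)(m^2+1)}$ for the ratio $d_\ell^2(m)/(d_\ell(m-1)d_\ell(m+1))$, so everything reduces to verifying the elementary inequality \eqref{ineq:1.10}, namely that this lower bound itself dominates $(m^2+1)/m^2$ whenever $\ell\ge 2$ and $m\ge \ell+1$. Chaining the two inequalities then yields the claim. The entire content is a polynomial estimate, and the one point that genuinely requires care is the correct use of the hypothesis $\ell\ge 2$.

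To prove \eqref{ineq:1.10}, I would first observe that for $m\ge \ell+1\ge 3$ every factor $m-\ell$, $m+1$, $m^2+1$, $m^2$ is strictly positive, so the denominators may be cleared without reversing the inequality. After cross-multiplying by the positive quantity $(m-\ell)(m+1)(m^2+1)\,m^2$, the target \eqref{ineq:1.10} becomes equivalent to
\begin{align*}
(m-\ell+1)\,m^5 > (m-\ell)(m+1)(m^2+1)^2.
\end{align*}
Writing $s=m-\ell$ (so $s\ge 1$) and expanding $(m+1)(m^2+1)^2 = m^5+m^4+2m^3+2m^2+m+1$, the common term $s\,m^5$ cancels from both sides, and the inequality collapses to the clean form
\begin{align*}
m^5 > (m-\ell)\bigl(m^4+2m^3+2m^2+m+1\bigr).
\end{align*}

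The last step is exactly where the hypothesis $\ell\ge 2$ enters. Since the quartic factor on the right is positive, the right-hand side is increasing in $m-\ell$, and $\ell\ge 2$ forces $m-\ell\le m-2$, so it suffices to check the inequality at $m-\ell=m-2$. Substituting gives $(m-2)(m^4+2m^3+2m^2+m+1)=m^5-2m^3-3m^2-m-2$, whence the inequality reduces to $2m^3+3m^2+m+2>0$, which holds trivially for all $m\ge 3$. This completes the verification of \eqref{ineq:1.10}, and with it the corollary. I expect no serious obstacle, since Theorem~\ref{thm:lbdlm2} carries all the difficulty; the only subtlety worth flagging is that this reduction breaks down for $\ell=1$ (there $m-\ell=m-1$ and the analogous quantity $m^4-m^2-1$ is positive), which confirms that $\ell\ge 2$ is precisely the threshold making the bound of Theorem~\ref{thm:lbdlm2} strong enough to beat $(m^2+1)/m^2$.
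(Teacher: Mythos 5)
Your proposal is correct and follows exactly the paper's route: the corollary is obtained by chaining the lower bound of Theorem~\ref{thm:lbdlm2} with the elementary inequality \eqref{ineq:1.10}, which the paper dismisses as ``easily checked'' and which you verify correctly (your cross-multiplication, cancellation of $(m-\ell)m^5$, and reduction to $2m^3+3m^2+m+2>0$ at the extreme case $m-\ell=m-2$ are all accurate, as is your remark that the bound fails for $\ell=1$).
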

Clearly, Theorems \ref{thm:rulcm} and \ref{thm:lbdlm2} imply, respectively, that the transposed Boros-Moll sequence $\{d_\ell(m)\}_{m\ge \ell}$ is strictly log-convex for $\ell=0$ and is strictly log-concave for each $\ell\ge 1$.
Besides, Corollary \ref{coro:LcEtBMs} establishes an inequality which is stronger than the log-concavity of the transposed Boros-Moll sequences.

Moreover, we obtain the following relation which implies that the transposed Boros-Moll sequences possess a stronger log-concave property than the Boros-Moll sequences do.
\begin{proposition}\label{Prop:3.5}
For $\ell\ge 1$ and $m\ge \ell+1$, we have
\begin{align}\label{ieq:Lgvm>Lgvell}
 d_\ell^2(m)>d_\ell(m-1)d_\ell(m+1)>d_{\ell-1}(m)d_{\ell+1}(m).
\end{align}
\end{proposition}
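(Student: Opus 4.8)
The plan is to handle the two inequalities in \eqref{ieq:Lgvm>Lgvell} separately, since the left one is precisely the strict log-concavity of the transposed sequence while the right one carries the new content.

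For the left inequality $d_\ell^2(m) > d_\ell(m-1)d_\ell(m+1)$, I would deduce it directly from Theorem \ref{thm:lbdlm2}: the lower bound in \eqref{ineq:lbdlm2} exceeds $1$ exactly when $(m-\ell+1)m^3 > (m-\ell)(m+1)(m^2+1)$, and the difference of the two sides factors as $(\ell-1)m^2 + (\ell-1)m + \ell$, which is positive for every $\ell\ge 1$. So no new work is needed beyond a one-line check.

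For the right inequality I would factor the target ratio through $d_\ell^2(m)$,
\[
\frac{d_\ell(m-1)d_\ell(m+1)}{d_{\ell-1}(m)d_{\ell+1}(m)}
= \frac{d_\ell^2(m)}{d_{\ell-1}(m)d_{\ell+1}(m)} \cdot
  \frac{d_\ell(m-1)d_\ell(m+1)}{d_\ell^2(m)},
\]
and then bound each factor from below: the first by the Chen--Gu inequality \eqref{eq:lb-mell2}, and the second by the reciprocal of the upper bound \eqref{ineq:ubdlm2} of Theorem \ref{thm:rulcm}, namely $d_\ell(m-1)d_\ell(m+1)/d_\ell^2(m) > (m-\ell)(m+1)/((m-\ell+1)m)$. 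Multiplying the two lower bounds, the factors $(m-\ell+1)$ and $(m-\ell)$ cancel and leave the clean expression $(\ell+1)(m+\ell)(m+1)/(\ell m(m+\ell+1))$.

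The last step is the elementary verification that this expression is greater than $1$; expanding $(\ell+1)(m+\ell)(m+1) - \ell m (m+\ell+1)$ collapses to $m^2 + \ell m + \ell^2 + m + \ell$, which is manifestly positive, so the inequality holds on the whole range $1\le\ell\le m-1$ where both auxiliary bounds are valid. I do not expect a genuine obstacle: the argument is pure bound-chaining, and the only point requiring care is to orient the two ratio estimates so that their quotient produces a lower bound for the target rather than an upper bound.
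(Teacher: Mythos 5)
Your proposal is correct and follows essentially the same route as the paper: the left inequality is obtained from Theorem \ref{thm:lbdlm2} (the paper cites \cite[Theorem 3.1]{Jiang-Wang2024} but notes the same derivation), and the right inequality combines the Chen--Gu bound \eqref{eq:lb-mell2} with the upper bound \eqref{ineq:ubdlm2} of Theorem \ref{thm:rulcm}, your multiplicative factoring being just a rearrangement of the paper's chain of inequalities, ending in the identical positivity check $m^2+\ell m+\ell^2+m+\ell>0$.
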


\begin{proof}
Fixed $\ell\ge 1$ and $m\ge \ell+1$. The first inequality in \eqref{ieq:Lgvm>Lgvell} was proved in \cite[Theorem 3.1]{Jiang-Wang2024}, which can also be derived from Theorem \ref{thm:lbdlm2}. Combining \eqref{ineq:ubdlm2} and \eqref{eq:lb-mell2}, it follows that
\begin{align*}
\frac{d_\ell^2(m)}{d_\ell(m-1)d_\ell(m+1)}
<\frac{(m-\ell+1)m}{(m-\ell)(m+1)}
<\frac{(m-\ell+1)(\ell+1)(m+\ell)}{(m-\ell)\ell(m+\ell+1)}
<\frac{d_\ell^2(m)}{d_{\ell-1}(m)d_{\ell+1}(m)},
\end{align*}
which yields the second inequality in \eqref{ieq:Lgvm>Lgvell}.
\end{proof}

\subsection{A lower bound for $d_{\ell+1}(m)/d_\ell(m)$}\label{Sec:lb2}

In order to prove Theorem \ref{thm:rulcm}, we first establish a sufficiently sharp lower bound for the ratio $d_{\ell+1}(m)/d_\ell(m)$ which is stated in Theorem \ref{thm:dl1mm>}.
For $\ell\ge 1$ and $m\ge \ell+1$, set
\begin{align}\label{def:Lml}
W(\ell,m)=\frac{m(2m+1)(2\ell+3)-\sqrt{\Delta_1}}{4m(\ell^2+\ell)},
\end{align}
where
\begin{align}\label{def:Delta_1}
\Delta_1=52m^4+(64\ell^2+56)m^3+(16\ell^4+36\ell^2+13)m^2-8\ell^2m-4\ell^2.
\end{align}

\begin{theorem}\label{thm:dl1mm>}
Let $W(\ell,m)$ be given by \eqref{def:Lml}. For integers $\ell\ge 1$ and $m\ge \ell+1$, we have
\begin{align}\label{ineq:dl1mm>}
\frac{d_{\ell+1}(m)}{d_\ell(m)}> W(\ell,m).
\end{align}
\end{theorem}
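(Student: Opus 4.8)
The plan is to recast the claimed bound as a statement about the companion ratio in the $m$-direction and then to run an induction on $m$ with $\ell$ fixed. Solving the cross-recurrence \eqref{eq:reclm2} for $d_{\ell+1}(m)$ gives
\[
\frac{d_{\ell+1}(m)}{d_\ell(m)}=\frac{(4m-2\ell+3)(m+\ell+1)}{2\ell(\ell+1)}-\frac{(m+1)(m+1-\ell)}{\ell(\ell+1)}\,\frac{d_\ell(m+1)}{d_\ell(m)},
\]
so that, writing $s_m=d_\ell(m+1)/d_\ell(m)$, the ratio $d_{\ell+1}(m)/d_\ell(m)$ is a strictly decreasing affine function of $s_m$. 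Hence \eqref{ineq:dl1mm>} is \emph{equivalent} to an upper bound $s_m<S(\ell,m)$, where $S(\ell,m)$ is the value of $s_m$ that forces equality in \eqref{ineq:dl1mm>}; solving that linear equation yields $S(\ell,m)=\big(b_m+\sqrt{\Delta_1}\,\big)/a_m$ with $a_m=4m(m+1)(m+1-\ell)$ and $b_m=m(8m^2+8m-4\ell^2+3)$, i.e.
\[
S(\ell,m)=\frac{m(8m^2+8m-4\ell^2+3)+\sqrt{\Delta_1}}{4m(m+1)(m+1-\ell)} .
\]
Thus it suffices to prove $s_m<S(\ell,m)$ for all $m\ge \ell+1$.

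I would prove this by induction on $m$ using the pure $m$-recurrence \eqref{eq:rec-dlm-m}. Dividing \eqref{eq:rec-dlm-m} by $4m(m+1)(m+1-\ell)d_\ell(m)$ expresses $s_m$ as
\[
s_m=\underbrace{\frac{8m^2+8m-4\ell^2+3}{2(m+1)(m+1-\ell)}}_{=:A_m}-\underbrace{\frac{(16m^2-1)(m+\ell)}{4m(m+1)(m+1-\ell)}}_{=:B_m}\cdot\frac{1}{s_{m-1}},
\]
which is a strictly increasing function of $s_{m-1}$. For the base case $m=\ell$ one uses $d_\ell(\ell-1)=0$, so \eqref{eq:rec-dlm-m} collapses to $s_\ell=(2\ell+1)(2\ell+3)/\big(2(\ell+1)\big)$; the pleasant identity $\Delta_1\big|_{m=\ell}=\ell^2(2\ell+1)^2(2\ell+3)^2$ then gives $S(\ell,\ell)=s_\ell$ exactly. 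This tight seed fixes the normalization of $S$ and starts the induction. For the inductive step, assuming $s_{m-1}\le S(\ell,m-1)$, monotonicity of the displayed map gives $s_m\le A_m-B_m/S(\ell,m-1)$, so it remains to establish the key inequality
\[
A_m-\frac{B_m}{S(\ell,m-1)}\le S(\ell,m),
\]
with strictness for $m\ge \ell+1$; strict monotonicity of the map together with the strict hypothesis then upgrades the conclusion to $s_m<S(\ell,m)$.

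The main obstacle is this last inequality, because it couples the two distinct surds $\sqrt{\Delta_1(\ell,m-1)}$ and $\sqrt{\Delta_1(\ell,m)}$ arising from $S(\ell,m-1)$ and $S(\ell,m)$. My plan is to rationalize $1/S(\ell,m-1)=a_{m-1}/\big(b_{m-1}+\sqrt{\Delta_1(\ell,m-1)}\big)$ against its conjugate, collect the rational part on one side, and isolate a single surd; after two controlled squarings (each legitimate once the relevant side is shown nonnegative, which uses $\Delta_1>0$ for $m\ge\ell+1$, clear from \eqref{def:Delta_1}) the inequality becomes a polynomial inequality in $\ell$ and $m$ that I expect to hold throughout $m\ge\ell+1$ and to be provable by elementary estimates, e.g.\ by exhibiting it as a sum of manifestly nonnegative terms. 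Verifying nonnegativity of this final polynomial, together with careful sign-bookkeeping under the two squarings, is the technical heart of the argument; by contrast the reduction to $s_m<S(\ell,m)$ and the monotonicity of the recurrence are routine.
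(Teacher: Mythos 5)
Your setup is correct, and I verified its ingredients: via \eqref{eq:reclm2} the claim \eqref{ineq:dl1mm>} is indeed equivalent to the upper bound $s_m<S(\ell,m)$ with $S(\ell,m)=\bigl(m(8m^2+8m-4\ell^2+3)+\sqrt{\Delta_1}\bigr)/\bigl(4m(m+1)(m+1-\ell)\bigr)$; the recurrence \eqref{eq:rec-dlm-m} does give $s_m=A_m-B_m/s_{m-1}$ with your $A_m,B_m$; and your seed identity $\Delta_1|_{m=\ell}=\ell^2(2\ell+1)^2(2\ell+3)^2$, whence $S(\ell,\ell)=s_\ell=(2\ell+1)(2\ell+3)/\bigl(2(\ell+1)\bigr)$, is exactly right. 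Be aware, though, that this is closer to the paper's argument than it may look: the paper also inducts on $m$ with the hypothesis invoked only at the same $\ell$ (its separately-checked boundary case $\ell=m$ plays the role of your seed, one step later), and its inductive step, rewritten in your variable $s_m$, is precisely the implication ``$s_m<S(\ell,m)\Rightarrow s_{m+1}<S(\ell,m+1)$'' that you need. Your coordinates do make the seed cleaner (exact equality rather than a boundary computation).

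The genuine gap is that the ``key inequality'' $A_m-B_m/S(\ell,m-1)<S(\ell,m)$ is never proved; you only predict that, after two squarings, it becomes a polynomial inequality provable ``by elementary estimates'' as a sum of manifestly nonnegative terms. That prediction is unsupported, and this step is where the entire content of the theorem sits. Clearing the common denominator $4m(m+1)(m+1-\ell)$, your key inequality reads $E+(\sqrt{\Delta_1}-N)\,S(\ell,m-1)>0$, where $N=m(8m^2+8m-4\ell^2+3)$ and $E=(16m^2-1)(m+\ell)$, and here $N>\sqrt{\Delta_1}$; an expansion in $1/m$ shows that the $m^3$ and $m^2$ terms of $E$ and of $(N-\sqrt{\Delta_1})S(\ell,m-1)$ cancel identically, leaving a margin of order $m$ against terms of size $16m^3$. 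Numerically the inequality does hold, but razor-thin: at $(\ell,m)=(1,10)$ one has $E=17589$ versus $(N-\sqrt{\Delta_1})S(1,9)\approx 17573$, a relative margin of order $10^{-3}$ (in general of order $m^{-2}$). No term-by-term or crude estimate can certify a cancellation this fine; what is required is exactly the exact two-surd computation the paper carries out in its inductive step (its $\Delta_2$ is $\Delta_1$ with $m$ replaced by $m+1$, and its inequality $P\cdot W(\ell,m)>Q$ is, under your change of variables, your key inequality shifted by one index). So the plan is viable---the key inequality is true---but as submitted the argument is missing precisely its technical heart, and the claim that elementary estimates will close it underestimates what remains to be done.
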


\begin{proof}
Note that Theorem \ref{thm:dl1mm>} is equivalent to the following statement. That is,
\begin{align}\label{ineq:dl1mm>rsm}
\frac{d_{\ell+1}(m)}{d_\ell(m)}> W(\ell,m),
\end{align}
for $m\ge 2$ and $1\le \ell \le m-1$.
So, we aim to prove \eqref{ineq:dl1mm>rsm} by using induction on $m$. For $m=2$ and $\ell=1$, it is easy to check that
\begin{align*}
\frac{d_{2}(2)}{d_1(2)}-W(1,2)
=\frac{2}{5}-\frac{25-2\sqrt{127}}{8}
=\frac{10\sqrt{127}-109}{40}>0.
\end{align*}
Assume that \eqref{ineq:dl1mm>rsm} is true, that is, for $1\le \ell \le m-1$,
\begin{align}\label{ineq:eqdl1mm>}
d_{\ell+1}(m)> W(\ell,m) d_\ell(m).
\end{align}
It suffices to prove that for $1\le \ell \le m$,
\begin{align}\label{ineq:eqdl1mm1>}
d_{\ell+1}(m+1)> W(\ell,m+1) d_\ell(m+1).
\end{align}

For $\ell=m$, we have $d_{m+1}(m+1)/d_m(m+1)=2/(2m+3)$, and
$$
W(m,m+1)=\frac{4m^3+16m^2+21m+9-\sqrt{\omega}}{4m(m+1)^2},
$$
where
$$
\omega=16m^6+96m^5+296m^4+520m^3+581m^2+402m+121>0.
$$
Direct computation gives that
\begin{align*}
\frac{d_{m+1}(m+1)}{d_m(m+1)}-W(m,m+1)
=\frac{(2m+3)\sqrt{\omega}-(8m^4+36m^3+74m^2+73m+27)}{4m(2m+3)(m+1)^2}>0,
\end{align*}
since $(2m+3)^2 \omega-(8m^4+36m^3+74m^2+73m+27)^2=$$4(4m+3)(4m+5)(m^2+6m+6)>0$.
Thus, \eqref{ineq:eqdl1mm1>} holds for $\ell=m$.

It remains to show \eqref{ineq:eqdl1mm1>} for $1\le \ell \le m-1$. To this end, applying the recurrence relations \eqref{eq:reclm1} and \eqref{eq:reclm2},
we have
\begin{align*}
d_{\ell+1}(m+1)
=&\ \frac{m+\ell+1}{m+1}d_{\ell}(m)+\frac{4m+2\ell+5}{2(m+1)}d_{\ell+1}(m), \\
d_\ell(m+1)
=&\ \frac{(4m-2\ell+3)(m+\ell+1)}{2(m+1)(m+1-\ell)}d_{\ell}(m)
 -\frac{\ell(\ell+1)}{(m+1)(m+1-\ell)}d_{\ell+1}(m),
\end{align*}
for $1\le \ell \le m-1$. 
Then the inequality \eqref{ineq:eqdl1mm1>} can be rewritten as
\begin{align}\label{ineq:dl1mm1>re}
P \cdot d_{\ell+1}(m)> Q\cdot d_\ell(m),
\end{align}
where
\begin{align*}
P=&\ 8m^3+32m^2+43m-4\ell^2m-4\ell^2+19-\sqrt{\Delta_2}, \\
Q=&\ \frac{(m+1+\ell)\left((m+1)F-(4m-2\ell+3)\sqrt{\Delta_2}\right)}{2(\ell^2+\ell)},
\end{align*}
with
\begin{align*}
\Delta_2
=&\ 52m^4+(64\ell^2+264)m^3+(16\ell^4+228\ell^2+493)m^2+(32\ell^4+256\ell^2+402)m\\
 &\quad +16\ell^4+88\ell^2+121,\\
F=&\ 16\ell m^2+24m^2-16\ell^2 m+16\ell m+54m+8\ell^3-12\ell^2-8\ell+27.
\end{align*}
Clearly, $\Delta_2>0$ and $F>0$. Observe that $P>0$, because for $1\le \ell \le m-1$, we have $8m^3+32m^2+43m-4\ell^2m-4\ell^2+19>0$ and
\begin{align*}
 &(8m^3+32m^2+43m-4\ell^2m-4\ell^2+19)^2-\Delta_2\\
=&\ 64m^4(m^2-\ell^2)+128m^3(4m^2-3\ell^2)+4m^2(415m^2-207\ell^2)
 +8m(349m^2-94\ell^2)\\
 &\ +(2572m^2-240\ell^2)+1232m+240>0.
\end{align*}
Thus, in view of \eqref{ineq:eqdl1mm>} and \eqref{ineq:dl1mm1>re}, it is sufficient to show that for $1\le \ell \le m-1$,
\begin{align}\label{ineq:sfdl1mm>}
P \cdot W(\ell,m)> Q.
\end{align}

With the aid of a computer, it is easy to check that
\begin{align*}
P \cdot W(\ell,m)-Q
=&\ \frac{G_1+G_2\sqrt{\Delta_2}-(G_3-\sqrt{\Delta_2})\sqrt{\Delta_1}}{4m(\ell^2+\ell)},
\end{align*}
where
\begin{align*}
G_1=&\ (m^2+m)(12m^2+24m-16\ell^4+28\ell^2+3),\\
G_2=&\ 8m^3+8m^2-4\ell^2m+3m,\\
G_3=&\ (m+1)(8m^2+24m-4\ell^2+19).
\end{align*}
Observer that $G_3>G_2>0$ for $1\le \ell \le m-1$. Moreover, $G_3-\sqrt{\Delta_2}>0$ since
\begin{align*}
G_3^2-\Delta_2=&\ 4(4m+3)(4m+5)(m+2)^2(m+1+\ell)(m+1-\ell)>0.
\end{align*}
So we have $(G_3-\sqrt{\Delta_2})\sqrt{\Delta_1}>0$. To prove \eqref{ineq:sfdl1mm>}, we need to determine the sing of $G_1+G_2\sqrt{\Delta_2}$.
\begin{claim}\label{cla:G12>0}
For $m\ge 2$ and $1\le \ell \le m-1$, we have $G_1+G_2\sqrt{\Delta_2}>0$.
\end{claim}

Since $G_1=(m^2+m)(12m^2+24m-16\ell^4+28\ell^2+3)$, it is clear that for any given $\ell\ge 1$, $G_1\ge 0$ for sufficiently large $m$.
If $G_1\ge 0$, then Claim \ref{cla:G12>0} holds. We proceed to prove the case that $G_1<0$. In this case, we have $G_1+G_2\sqrt{\Delta_2}=G_2\sqrt{\Delta_2}-|G_1|$.
Notice that
\begin{align*}
G_2^2 {\Delta_2}-|G_1|^2
=&\ 4m^2(4m+3)(4m+5)(m+1+\ell)(m+1-\ell)(52m^4+64\ell^2m^3+160m^3\\
 &\ +16\ell^4m^2+100\ell^2m^2+161m^2-8\ell^2m+80m-32\ell^4-20\ell^2+18)>0,
\end{align*}
which leads to $G_1+G_2\sqrt{\Delta_2}>0$. Thus Claim \ref{cla:G12>0} is proved.

We proceed to show
\begin{align}\label{ineq:G123Delta12}
G_1+G_2\sqrt{\Delta_2}>(G_3-\sqrt{\Delta_2})\sqrt{\Delta_1}.
\end{align}
In order to do so, we derive that
\begin{align*}
(G_1+G_2\sqrt{\Delta_2})^2-(G_3-\sqrt{\Delta_2})^2 {\Delta_1}
= -H_1+H_2\sqrt{\Delta_2},
\end{align*}
where
\begin{align*}
H_1
=&\ 8(m+1)^2(832m^7+1536\ell^2m^6+4576m^6+512\ell^4m^5+7104\ell^2m^5+9556m^5\\
 &\ +1792\ell^4m^4+11648\ell^2m^4+9358m^4+2048\ell^4m^3+7588\ell^2m^3+4192m^3\\
 &\ +800\ell^4m^2+770\ell^2m^2+646m^2-32\ell^6m+120\ell^4m-1018\ell^2m\\
 &\ -16\ell^6+32\ell^4-241\ell^2),\\
H_2
=&\ 8(m+1)(128m^6+128\ell^2m^5+496m^5+448\ell^2m^4+672m^4+480\ell^2m^3+368m^3\\
 &\ +120\ell^2m^2+64m^2+8\ell^4m-62\ell^2m+4\ell^4-19\ell^2).
\end{align*}
Clearly, $H_1>0$ and $H_2>0$ for $1\le \ell \le m-1$. In view of
\begin{align*}
H_2^2 \Delta_2-H_1^2
=&\ 256(m+1)^2(4m+3)^2(4m+5)^2\left((m+1)^2-\ell^2\right)^2(156m^8+192\ell^2m^7\\
 &\ +636m^7+816\ell^2m^6+891m^6+256\ell^4m^5+1256\ell^2m^5+498m^5\\
 &\ +64\ell^6m^4+720\ell^4m^4+812\ell^2m^4+87m^4+128\ell^6m^3+576\ell^4m^3\\
 &\ +208\ell^2m^3+48\ell^6m^2+88\ell^4m^2+19\ell^2m^2-64\ell^4m-16\ell^4)>0,
\end{align*}
we deduce that $H_2\sqrt{\Delta_2}-H_1>0$, which leads to \eqref{ineq:G123Delta12}, as well as \eqref{ineq:sfdl1mm>}, for $1\le \ell \le m-1$.
This completes the proof.
\end{proof}

\subsection{Proof of Theorem \ref{thm:rulcm}}

We are now in a position to prove Theorem \ref{thm:rulcm}.
\begin{proof}[Proof of Theorem \ref{thm:rulcm}.]
Applying the recurrence relations \eqref{eq:rec-dlm-m} and \eqref{eq:reclm2}, the inequality \eqref{ineq:ubdlm2} can be rewritten as
\begin{align}\label{ineq:qua<0}
A \left(\frac{d_{\ell+1}(m)}{d_\ell(m)}\right)^2
+B \left(\frac{d_{\ell+1}(m)}{d_\ell(m)}\right)+C<0,\quad m\ge \ell+1,
\end{align}
where
\begin{align*}
&A=4m^2\ell^2(\ell+1)^2,\\
&B=-2m^2(2m+1)\ell(\ell+1)(2\ell+3),\\
&C=(m-\ell)[4(\ell^2+3\ell-1)m^3+(4\ell^3+8\ell-5)m^2-(2\ell+1)m-\ell].
\end{align*}
The discriminant of the above quadratic function on $d_{\ell+1}(m)/d_\ell(m)$ is
\begin{align*}
\Delta
=4m^2\ell^2(\ell+1)^2 \Delta_1,
\end{align*}
where $\Delta_1$ is given by \eqref{def:Delta_1}.
Clearly, $\Delta_1>0$, and hence $\Delta>0$ for $\ell\ge 1$ and $m\ge \ell+1$. Thus, the quadratic function in \eqref{ineq:qua<0} has two distinct zeros, that is,
\begin{align*}
x_1=\frac{m(2m+1)(2\ell+3)-\sqrt{\Delta_1}}{4m(\ell^2+\ell)},\\
x_2=\frac{m(2m+1)(2\ell+3)+\sqrt{\Delta_1}}{4m(\ell^2+\ell)}.
\end{align*}
Since $A>0$, it suffices to prove that for $\ell\ge 1$ and $m\ge \ell+1$,
\begin{align}
x_1<\frac{d_{\ell+1}(m)}{d_\ell(m)}<x_2.
\end{align}

By Chen and Xia \cite[Lemma 3.1]{Chen-Xia2009}, for $\ell\ge 1$ and $m\ge \ell+1$,
\begin{align}
\frac{d_{\ell+1}(m)}{d_\ell(m)}<\frac{m-\ell}{\ell+1}.
\end{align}
It is easy to check that
\begin{align*}
\frac{m-\ell}{\ell+1}-x_2
=-\frac{m(4\ell^2+2\ell+6m+3)+\sqrt{\Delta_1}}{4m(\ell^2+\ell)}<0,
\end{align*}
which leads to $d_{\ell+1}(m)/d_\ell(m)<x_2$ for $\ell\ge 1$ and $m\ge \ell+1$.
It remains to show that for $\ell\ge 1$ and $m\ge \ell+1$,
\begin{align}
\frac{d_{\ell+1}(m)}{d_\ell(m)}> x_1,
\end{align}
which is obtained in Theorem \ref{thm:dl1mm>}.
This completes the proof.
\end{proof}

\subsection{Proof of Theorem \ref{thm:lbdlm2}}\label{Sec:pfThm2.2}

The goal of this part is to complete the proof of Theorem \ref{thm:lbdlm2}, the lower bound for $d_\ell^2(m)/(d_\ell(m-1)d_\ell(m+1))$.

\begin{proof}[Proof of Theorem \ref{thm:lbdlm2}]
Fix $\ell\ge 0$.
Applying the recurrence relations \eqref{eq:rec-dlm-m}, the inequality \eqref{ineq:lbdlm2} can be rewritten as
\begin{align}\label{ineq:qua>0}
\mathcal{A} \left(\frac{d_{\ell}(m+1)}{d_\ell(m)}\right)^2
+\mathcal{B} \left(\frac{d_{\ell}(m+1)}{d_\ell(m)}\right)
+\mathcal{C}>0,\quad m\ge \ell+1,
\end{align}
where
\begin{align*}
&\mathcal{A}=4m^4(m+1-\ell)^2(m+1),\\
&\mathcal{B}=-2m^4(m+1-\ell)(8m^2+8m-4\ell^2+3),\\
&\mathcal{C}=(16m^2-1)(m+1)(m^2+1)(m^2-\ell^2).
\end{align*}
The discriminant of the above quadratic function on $d_{\ell}(m+1)/d_\ell(m)$ is
\begin{align*}
\Delta=4m^4(m+1-\ell)^2\cdot f_\ell(m),
\end{align*}
where
\begin{align}
f_\ell(m)
=&\ -12m^6+(64\ell^2-72)m^5+(16\ell^4+100\ell^2-47)m^4+(120\ell^2+8)m^3 \label{defi:fellm}\\
 &\ +(56\ell^2+4)m^2-8\ell^2m-4\ell^2.\nonumber
\end{align}

We aim to prove \eqref{ineq:qua>0}. For this purpose, we need to determine the sign of $\Delta$ first.
For $\ell=0$, we have
\begin{align*}
f_0(m)=-m^2[(m-1)(12m^3+84m^2+131m+123)+119]<0
\end{align*}
for $m\ge 1$. Thus $\Delta<0$ for $\ell=0$ and $m\ge 1$. So \eqref{ineq:qua>0} is proved for $\ell=0$, since $\mathcal{A}>0$.

We proceed to prove \eqref{ineq:qua>0} for $\ell\ge 1$.
Given $\ell\ge 1$, observe that the number of sign changes of the coefficients sequence of $f_\ell(m)$ is $2$. Thus by Descartes' rule of signs, the polynomial $f_\ell(m)$ has at most two positive zeros. In view of the fact that $f_\ell(0)=-4 \ell^2<0$, $f_\ell(1)=16\ell^4+328\ell^2-119>0$ and $f_\ell(+\infty)=-\infty$, the polynomial $f_\ell(m)$ has only two positive zeros, denoted by $r_1(\ell)$ and $r_2(\ell)$, where $0<r_1(\ell)<1<r_2(\ell)<+\infty$.

Clearly, $f_\ell(\ell+1)=16\ell^8+128\ell^7+504\ell^6+1080\ell^5+1085\ell^4+44\ell^3
-826\ell^2-588\ell-119>0$ for any integer $\ell\ge 1$. Thus, $r_2(\ell)>\ell+1$. Then we shall discuss in two cases.

{\bf Case 1}.
$m> \lfloor r_2(\ell)\rfloor\ge \ell+1$.
In this case, we have $f_\ell(m)<0$, and hence $\Delta<0$, which yields \eqref{ineq:qua>0} since $\mathcal{A}>0$.

{\bf Case 2}.
$r_1(\ell)<\ell+1\le m \le \lfloor r_2(\ell)\rfloor$. In this case, we have $f_\ell(m)\ge 0$, and hence $\Delta\ge 0$.
Thus, the quadratic function in \eqref{ineq:qua>0} has two zeros, that is,
\begin{align*}
 x_\ell(m)
=\frac{m^2(8m^2+8m-4\ell^2+3)-\sqrt{f_\ell(m)}}{4m^2(m+1)(m+1-\ell)},\\
 y_\ell(m)
=\frac{m^2(8m^2+8m-4\ell^2+3)+\sqrt{f_\ell(m)}}{4m^2(m+1)(m+1-\ell)}.
\end{align*}

To prove \eqref{ineq:qua>0} for $\ell\ge 1$, we have the following claim.
\begin{claim}\label{cl:dl1mlm>}
For $\ell\ge 1$ and $\ell+1\le m \le \lfloor r_2(\ell)\rfloor$, we have
\begin{align}\label{ineq:dl1m1>ylm}
\frac{d_{\ell}(m+1)}{d_\ell(m)}>y_\ell(m).
\end{align}
\end{claim}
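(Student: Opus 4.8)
The plan is to prove the sharp lower bound $d_\ell(m+1)/d_\ell(m) > y_\ell(m)$ by induction on $m$, using the three-term recurrence \eqref{eq:rec-dlm-m}. Set $u_m = d_\ell(m+1)/d_\ell(m)$. Dividing \eqref{eq:rec-dlm-m} by $d_\ell(m)$ and rearranging gives
\begin{align*}
u_m = \frac{8m^2+8m-4\ell^2+3}{2(m+1)(m+1-\ell)} - \frac{(16m^2-1)(m+\ell)}{4m(m+1)(m+1-\ell)}\cdot\frac{1}{u_{m-1}}.
\end{align*}
Since both coefficients are positive and $u_{m-1}>0$ on the relevant range, $u_m$ is a strictly increasing function of $u_{m-1}$; hence any lower bound on $u_{m-1}$ propagates forward to a lower bound on $u_m$.

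First I would pin down the anchor of the induction at $m=\ell$. From \eqref{eq:B-M-seq} one computes directly that $d_\ell(\ell)=2^{-\ell}\binom{2\ell}{\ell}$ and $d_\ell(\ell+1)=2^{-\ell-1}\bigl[\binom{2\ell+1}{\ell}+(\ell+1)\binom{2\ell+2}{\ell+1}\bigr]$, so that $u_\ell=(2\ell+1)(2\ell+3)/\bigl(2(\ell+1)\bigr)$. A short calculation shows $f_\ell(\ell)=\bigl[\ell^2(2\ell+1)(2\ell+3)\bigr]^2$ is a perfect square, whence $y_\ell(\ell)$ is rational and in fact $y_\ell(\ell)=u_\ell$ exactly. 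Thus the induction starts from the boundary equality $u_\ell=y_\ell(\ell)$.

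By the monotonicity above, the inductive step reduces to the analytic comparison inequality
\begin{align*}
\frac{8m^2+8m-4\ell^2+3}{2(m+1)(m+1-\ell)} - \frac{(16m^2-1)(m+\ell)}{4m(m+1)(m+1-\ell)\,y_\ell(m-1)} > y_\ell(m),
\end{align*}
to be checked for $\ell+1\le m\le\lfloor r_2(\ell)\rfloor$; on this range both $f_\ell(m)\ge 0$ and $f_\ell(m-1)\ge 0$, so $y_\ell(m)$ and $y_\ell(m-1)$ are real. Strictness of this inequality at $m=\ell+1$ upgrades the boundary equality to $u_{\ell+1}>y_\ell(\ell+1)$, and for $m\ge\ell+2$ the strict inductive hypothesis combined with the strict monotonicity already forces $u_m>y_\ell(m)$ once the comparison inequality is known.

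The main obstacle is the comparison inequality itself, which carries the two radicals $\sqrt{f_\ell(m)}$ and $\sqrt{f_\ell(m-1)}$. I would rationalize the term $1/y_\ell(m-1)$ by multiplying through by the conjugate $P_{m-1}-\sqrt{f_\ell(m-1)}$, where $P_{m-1}=(m-1)^2\bigl(8(m-1)^2+8(m-1)-4\ell^2+3\bigr)$, and clear denominators to reach a form $R>a\sqrt{f_\ell(m)}+b\sqrt{f_\ell(m-1)}$ with $R,a,b$ explicit polynomials in $\ell$ and $m$. After verifying the requisite sign conditions, one isolates a single radical, squares to remove it, then isolates and squares again to remove the second, arriving at polynomial inequalities of high degree. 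I expect these to be settled exactly as in the proof of Theorem \ref{thm:dl1mm>}: by displaying explicit factorizations whose factors (of the types $4m\pm c$, $m\pm\ell$, $(m+1)^2-\ell^2$, and positive polynomials in $m,\ell$) are manifestly positive for $1\le\ell\le m-1$, with the bookkeeping done by computer algebra. The delicate point is controlling the signs at each squaring, particularly near the upper endpoint $m=\lfloor r_2(\ell)\rfloor$, where $f_\ell(m)$ is small and $y_\ell(m)$ is close to $x_\ell(m)$.
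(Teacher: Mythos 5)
Your framework and anchor are correct, and the anchor is in fact elegant: from \eqref{eq:B-M-seq} one does get $d_\ell(\ell)=2^{-\ell}\binom{2\ell}{\ell}$ and $u_\ell=(2\ell+1)(2\ell+3)/(2(\ell+1))$, and indeed $f_\ell(\ell)=\ell^4(4\ell^2+8\ell+3)^2$, so $y_\ell(\ell)=u_\ell$ exactly; also, since both coefficients in your rearranged recurrence are positive, lower bounds do propagate forward. But there is a genuine gap: the inductive step, i.e.\ the comparison inequality $\Phi_m\bigl(y_\ell(m-1)\bigr)>y_\ell(m)$ (writing $u_m=\Phi_m(u_{m-1})$ for your rearranged recurrence), is never proved --- you only state that you \emph{expect} it to be settled by computer-assisted factorizations. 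That inequality is the entire analytic content of the claim. It does not follow from anything already established (knowing $u_{m-1}>y_\ell(m-1)$ and $u_m>y_\ell(m)$ says nothing about where $\Phi_m(y_\ell(m-1))$ sits relative to $y_\ell(m)$), and it is numerically tight at the upper endpoint: for $\ell=2$ one has $\lfloor r_2(2)\rfloor=18$, and $\Phi_{18}(y_2(17))\approx 2.1212$ versus $y_2(18)\approx 2.1136$, a relative margin of about $0.4\%$. A step this delicate cannot be deferred to an unexecuted computation.

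Moreover, your expectation that the post-squaring polynomials will factor into terms ``manifestly positive for $1\le\ell\le m-1$,'' as in Theorem \ref{thm:dl1mm>}, is structurally doubtful. Your inequality carries \emph{two} radicals, $\sqrt{f_\ell(m)}$ and $\sqrt{f_\ell(m-1)}$, whose radicands are nonnegative only on the window $m\le r_2(\ell)$; after double squaring, the resulting polynomial inequality need only hold --- and may only hold --- on that window, whose endpoint $r_2(\ell)$ is an algebraic number of high degree, so a globally valid positivity certificate of the kind used in Theorem \ref{thm:dl1mm>} may simply not exist, and the constraint $f_\ell(m)\ge 0$ might have to enter the argument in an essential way. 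The paper's proof avoids exactly this trap: it invokes the unconditional bound $d_\ell(m+1)/d_\ell(m)>U(\ell,m)$ of \cite[Theorem 2.1]{Zhao2023}, whose radicand $\lambda_\ell(m)$ is nonnegative for all $m\ge\ell+1$, and proves $U(\ell,m)\ge y_\ell(m)$; there $\sqrt{f_\ell(m)}$ appears only linearly, is removed by a single squaring, and the residual positivity (of $S_\ell(m)$) is settled by Descartes' rule of signs with endpoint checks, all valid without reference to the window. To complete your route you would need to actually carry out the two squarings with full sign analysis on the window; alternatively, replacing your inductive step by a comparison of $y_\ell(m)$ against an unconditional lower bound such as $U(\ell,m)$ collapses your argument into precisely the paper's proof.
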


Note that Zhao \cite[Theorem 2.1]{Zhao2023} obtained a lower bound for $d_{\ell}(m+1)/d_\ell(m)$. That is, for any $m\ge 2$ and $1\le \ell \le m-1$,
\begin{align}\label{ineq:lb-dlm1m}
\frac{d_\ell(m+1)}{d_\ell(m)}>U(\ell,m),
\end{align}
where
\begin{align}\label{eq:def-Lml}
 U(\ell,m)
=\frac{(4m^2+7m-2\ell^2+3)(m+\ell^2)
  +\ell\sqrt{\lambda_\ell(m)}}{2(m+1)(m-\ell+1)(m+\ell^2)},
\end{align}
with
\begin{align}\label{defi:labalm}
\lambda_\ell(m)=(m+\ell^2)(4\ell^4+8\ell^2m+5\ell^2+m).
\end{align}
Clearly, we have \eqref{ineq:lb-dlm1m} holds for $\ell\ge 1$ and $m\ge \ell+1$. Then it is sufficient to show that for $\ell\ge 1$ and $\ell+1\le m \le \lfloor r_2(\ell)\rfloor$,
\begin{align}\label{ineq:Lml>ylm}
U(\ell,m)\ge y_\ell(m).
\end{align}

We proceed to prove \eqref{ineq:Lml>ylm}.
By a simple computation, we have
\begin{align*}
 U(\ell,m)-y_\ell(m)
=\frac{K_1+K_2\sqrt{\lambda_\ell(m)}-K_3\sqrt{f_\ell(m)}}
{4m^2(m+1)(m-\ell+1)(m+\ell^2)},
\end{align*}
where
\begin{align*}
&K_1=3m^2(2m+1)(m+\ell^2),\\
&K_2=2\ell m^2,\\
&K_3=m+\ell^2.
\end{align*}
Thus, it remains to show
\begin{align}\label{ineq:K123}
K_1+K_2\sqrt{\lambda_\ell(m)}-K_3\sqrt{f_\ell(m)}\ge 0.
\end{align}
Note that $K_1,K_2,K_3>0$ for $\ell\ge 1$ and $m\ge \ell+1$. It follows that
\begin{align*}
&\left(K_1+K_2\sqrt{\lambda_\ell(m)}\right)^2
 -\left(K_3\sqrt{f_\ell(m)}\right)^2\\
=&\ 4(m+\ell^2)\left(m^2 M_1+(2\ell^4+\ell^2)m+\ell^4+3\ell m^4 (2m+1)\sqrt{\lambda_\ell(m)}
            -m^2 M_2\right)\\
>&\ 4m^2(m+\ell^2)\left(M_1+3\ell m^2(2m+1)\sqrt{\lambda_\ell(m)}
            -M_2\right),
\end{align*}
where
\begin{align*}
&M_1=12m^5+27m^4+(3\ell^2+14)m^3+\ell^2,\\
&M_2=4\ell^2m^4+12\ell^4m^3+(20\ell^4+16\ell^2+2)m^2+(30\ell^4+16\ell^2+1)m
       +14\ell^4.
\end{align*}
Obviously, $M_1,M_2>0$ for $\ell\ge 1$ and $m\ge \ell+1$.
We proceed to prove
\begin{align*}
M_1+3\ell m^2(2m+1)\sqrt{\lambda_\ell(m)}-M_2>0.
\end{align*}
To this end, we derive that
\begin{align*}
\left(M_1+3\ell m^2(2m+1)\sqrt{\lambda_\ell(m)}\right)^2-M_2^2
=S_\ell(m)+T_\ell(m)\sqrt{\lambda_\ell(m)},
\end{align*}
where
\begin{align*}
 S_\ell(m)
=&\ 144m^{10}+648m^9+(272\ell^4+108\ell^2+1065)m^8+(336\ell^6+504\ell^4
 +198\ell^2+756)m^7\\
&\ +(452\ell^6+169\ell^4+77\ell^2+196)m^6-(336\ell^8+336\ell^6+122\ell^4
 -16\ell^2)m^5\\
&\ -(1084\ell^8+1091\ell^6+360\ell^4+10\ell^2+4)m^4-(1536\ell^8+1600\ell^6
 +666\ell^4\\
&\ +68\ell^2+4)m^3-(1460\ell^8+1408\ell^6+372\ell^4+32\ell^2+1)m^2\\
&\ -(840\ell^8+448\ell^6+28\ell^4)m-(196\ell^8-\ell^4),\\
T_\ell(m)=&\ 6\ell m^2(2m+1)\big(12m^5+27m^4+(3\ell^2+14)m^3+\ell^2\big).
\end{align*}
Observe that $T_\ell(m)>0$. We claim that $S_\ell(m)>0$ for $\ell\ge 1$ and $m\ge \ell+1$.
Note that
\begin{align*}
 S_\ell(\ell)
=&\ -\ell^2(360\ell^{10}+1368\ell^9+2130\ell^8+1716\ell^7+822\ell^6
   +342\ell^5+186\ell^4+96\ell^3\\
 &\qquad\quad +35\ell^2+4\ell+1)<0,\\
 S_\ell(\ell+1)
=&\ 312\ell^{12}+2880\ell^{11}+11402\ell^{10}+26950\ell^9+48379\ell^8
   +84450\ell^7+146585\ell^6\\
 &\ +211096\ell^5+223433\ell^4+164196\ell^3
  +78696\ell^2+22230\ell+2800>0,
\end{align*}
for $\ell\ge 1$. 
Thus there exists a real $x_0\in(\ell,\ell+1)$ such that $S_\ell(x_0)=0$ for any $\ell\in \mathbb{R}$. It is clear that the number of sign changes of the coefficients sequence of $S_\ell(m)$ is one for $\ell\ge 1$. Therefore, by Descartes' rule of signs, the polynomial $S_\ell(m)$ has only one positive zero $x_0$ for $\ell\ge 1$. It follows that $S_\ell(m)>0$ for  $\ell\ge 1$ and $m\ge \ell+1$. So, we have
\begin{align*}
S_\ell(m)+T_\ell(m)\sqrt{\lambda_\ell(m)}>0
\end{align*}
for $\ell\ge 1$ and $m\ge \ell+1$, which leads to \eqref{ineq:K123}, and hence  \eqref{ineq:Lml>ylm} is proved.
\end{proof}

Recall that \eqref{eq:ub-mell2} and \eqref{eq:lb-mell2} give two bounds for $d_\ell^2(m)/(d_{\ell-1}(m)d_{\ell+1}(m))$ which were established by Chen and Gu \cite[Theorems 1.1 \& 1.2]{Chen-Gu2009} while studying the reverse ultra log-concavity of the Boros-Moll polynomials. 
The distance between these two bounds is
\begin{align*}
 D_1
=\frac{(m-\ell+1)(\ell+1)}{(m-\ell)\ell(m+\ell+1)}.
\end{align*}
These two bounds are very close to each other since $D_1$ is very small. As mentioned by Chen and Gu \cite{Chen-Gu2009}, in the asymptotic sense, the Boros-Moll sequences are just on the borderline between ultra log-concavity and reverse ultra log-concavity.

Also notice that the distance between the upper and lower bounds for $d_\ell^2(m)/(d_\ell(m-1)d_\ell(m+1))$ provided by Theorems \ref{thm:rulcm} and \ref{thm:lbdlm2}, respectively, is
\begin{align*}
 D_2
=\frac{(m-\ell+1)m}{(m-\ell)(m+1)(m^2+1)}.
\end{align*}
Clearly, for any given $\ell\ge 1$, $\lim_{m\rightarrow\infty} D_2/m^{-2}=1$, which implies that $D_2$ is very small. It means that the two bounds given in Theorems \ref{thm:rulcm} and \ref{thm:lbdlm2} are very sharp. Therefore, in the asymptotic sense, we may say that the transposed Boros-Moll sequences are just on the borderline between extended ultra log-concavity and extended reverse ultra log-concavity.

\section{Some conjectures}\label{Sec:Conj}
To conclude this paper, we propose some conjectures related to the Boros-Moll sequences and their transposes.

Motivated by Boros and Moll's $\infty$-log-concavity conjecture, we first propose a conjecture on the $\infty$-log-concavity of the transposed Boros-Moll sequences $\{d_\ell(m)\}_{m\geq \ell}$.
Note that the sequences $\{d_0(m)\}_{m\ge 0}$ and $\mathcal{L}\big(\{d_1(m)\}_{m\geq 1}\big)$ were proved to be log-convex by Jiang and Wang \cite{Jiang-Wang2024}. For $\ell=2$, it is easily checked with a computer that the sequence $\mathcal{L}^5\big(\{d_2(m)\}_{m\geq 2}\big)$ got many negative terms, which implies that the sequence $\{d_2(m)\}_{m\geq 2}$ is not $5$-log-concave. However, for $\ell\geq 3$, numerical computation reveals that the beginning terms of the sequences $\mathcal{L}^j\big(\{d_\ell(m)\}_{m\geq \ell}\big)$ are positive and increase very fast, which suggests the following conjecture.

\begin{conjecture}
The transposed Boros-Moll sequences $\{d_\ell(m)\}_{m\geq \ell}$ are $\infty$-strictly-log-concave for any $\ell\geq 3$.
\end{conjecture}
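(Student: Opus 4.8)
The plan is to reduce the $\infty$-log-concavity to a single asymptotic invariant attached to the sequence and then to control the finitely-supported ``boundary'' behaviour uniformly in the iteration depth. Write $a^{(0)}=\{d_\ell(m)\}_{m\ge \ell}$ and $a^{(k)}=\mathcal L(a^{(k-1)})$. Since $\mathcal L(b)\ge 0$ exactly when $b$ is log-concave, one has $a^{(k)}\ge 0$ iff $a^{(k-1)}$ is log-concave, so the whole problem is governed by whether every iterate $a^{(k)}$ remains log-concave. The governing heuristic is that for a ``smooth'' positive sequence with $d_\ell(m)\sim C_\ell\,2^m m^{\gamma_\ell}$ the log-concavity ratio behaves like
\begin{align*}
\frac{d_\ell^2(m)}{d_\ell(m-1)d_\ell(m+1)}=1+\frac{\gamma_\ell}{m^2}+O\!\left(\frac1{m^3}\right),
\end{align*}
so log-concavity in the bulk is equivalent to positivity of the power $\gamma_\ell$. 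The strategy is therefore: (i) compute $\gamma_\ell$; (ii) track how $\gamma$ transforms under $\mathcal L$; (iii) handle the bounded-$m$ region.

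For step (i) I would run a Birkhoff--Trjitzinsky analysis on the three-term recurrence \eqref{eq:rec-dlm-m}. After dividing by the leading coefficients the frozen characteristic equation is $x^2-4x+4=0$, a double root $x=2$; hence $d_\ell(m)\sim C\,2^m m^{\gamma}$ with the power $\gamma$ pinned down only at the next order. Substituting the ratio ansatz $d_\ell(m+1)/d_\ell(m)=2\bigl(1+\gamma/m+\cdots\bigr)$ into \eqref{eq:rec-dlm-m} and matching the coefficient of $m$ (the coefficient of $m^2$ cancels identically, reflecting the degeneracy) yields
\begin{align*}
\gamma^2-(2\ell-1)\gamma+\bigl(\ell-\tfrac14\bigr)\bigl(\ell-\tfrac34\bigr)=0,
\end{align*}
whose roots are $\gamma=\ell-\tfrac14$ and $\gamma=\ell-\tfrac34$. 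The dominant solution carries the larger power, so $\gamma_\ell=\ell-\tfrac14$; this lies in $(\ell-1,\ell)$, in agreement with the sharp two-sided bounds of Theorems \ref{thm:rulcm} and \ref{thm:lbdlm2}.

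For step (ii), writing $\mathcal L(a)_m=d_\ell(m-1)d_\ell(m+1)(\rho_m-1)$ and taking logarithms shows that the growth base squares (from $2$ to $4$) while the power transforms as $\gamma\mapsto 2\gamma-2$, since $\log(\rho_m-1)\sim\log\gamma-2\log m$ contributes $-2\log m$ against the $2\gamma\log m$ coming from $2\log d_\ell(m)$. Iterating, the $k$-th iterate $a^{(k)}$ has power
\begin{align*}
\gamma_k=2+2^k\bigl(\gamma_\ell-2\bigr)=2+2^k\bigl(\ell-\tfrac94\bigr).
\end{align*}
For $\ell\ge 3$ one has $\ell-\tfrac94>0$, so $\gamma_k\to+\infty$ and every iterate is log-concave in the bulk, giving positivity for large $m$. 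As a consistency check, $\ell=2$ gives $\gamma_k=2-2^{\,k-2}$, first negative at $k=4$; since $a^{(k)}\ge 0$ requires $\gamma_{k-1}>0$, this predicts that $\mathcal L^5\bigl(\{d_2(m)\}\bigr)$ first develops negative terms, exactly the failure recorded in the paper.

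For step (iii), which I expect to be the main obstacle, the asymptotic argument only yields positivity for $m\ge M_k$, and a priori $M_k$ may grow with $k$, so it does not by itself settle the bounded-$m$ boundary near $m=\ell$, where a finite computation handles each fixed $k$ but not all $k$ at once; this is the usual reason $\infty$-log-concavity admits no finite certificate. To close this gap I would seek a global, self-improving family of inequalities, for instance explicit bounds $1+c_k/m^2<\rho^{(k)}_m<1+C_k/m^2$ with $c_k,C_k$ obeying the map $\gamma\mapsto 2\gamma-2$, valid for every $m\ge \ell$, using the sharp bounds of Theorems \ref{thm:rulcm} and \ref{thm:lbdlm2} as the base case $k=0$ together with a propagation lemma showing $\mathcal L$ preserves the family. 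Establishing such an exactly-preserved (rather than merely asymptotic) inequality is the crux, and the difficulty is compounded by the fact that $\mathcal L^k$ satisfies recurrences whose order and coefficient complexity grow with $k$. An alternative would be a Br\"{a}nd\'{e}n-style route via real-rootedness of auxiliary, now necessarily entire, generating functions, but transferring his finite-polynomial criterion to the infinite transposed sequence is itself a substantial difficulty.
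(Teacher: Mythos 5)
The statement you are trying to prove is one of the paper's \emph{open conjectures} (Section \ref{Sec:Conj}); the paper offers no proof at all, only numerical evidence: the observed failure of $5$-log-concavity of $\{d_2(m)\}_{m\ge 2}$, and the rapid growth of the initial terms of $\mathcal{L}^j\big(\{d_\ell(m)\}_{m\ge\ell}\big)$ for $\ell\ge 3$. So there is no proof in the paper to compare yours against; the only question is whether your proposal itself settles the conjecture, and it does not. The gap is exactly your step (iii), which you acknowledge but do not close. Steps (i)--(ii) yield at best the following: for each fixed $k$ there exists $M_k$ such that $a^{(k)}_m>0$ for $m\ge M_k$. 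Since nothing bounds $M_k$ uniformly in $k$, this does not even establish $k$-log-concavity for a single $k$ beyond what finite computation gives, let alone $\infty$-log-concavity; the ``self-improving family of inequalities'' preserved by $\mathcal{L}$ that you call the crux is precisely the content of the conjecture, and it is named rather than constructed. This is the same structural obstruction that keeps Conjecture \ref{conj:B-M-inflv} open despite extensive asymptotic knowledge of the Boros--Moll sequences.

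Even as a heuristic, steps (i)--(ii) need repair before they could anchor such a program. The frozen characteristic polynomial of \eqref{eq:rec-dlm-m} has the double root $x=2$, and in this degenerate case Birkhoff--Trjitzinsky expansions may in general contain factors $e^{c\sqrt{m}}$; your pure power-law ansatz silently excludes these, and their absence needs justification (it can be obtained from the explicit sum \eqref{eq:B-M-seq}, which does give $d_\ell(m)\sim C_\ell\,2^m m^{\ell-1/4}$, so your value $\gamma_\ell=\ell-\tfrac14$ is correct, but not by the argument you give). More seriously, the propagation rule $\gamma\mapsto 2\gamma-2$ presumes that each iterate $a^{(k)}$ is again a ``smooth'' power-law sequence. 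Because $\mathcal{L}$ cancels the two leading orders of the expansion, the leading behaviour of $a^{(k+1)}$ is controlled by \emph{lower-order} terms of the expansion of $a^{(k)}$; hence one needs full asymptotic expansions with error terms whose size is controlled after each application of $\mathcal{L}$, and this control must again be uniform in $k$ --- the same uniformity problem as in step (iii). Your consistency checks are genuinely valuable as evidence (the predicted first failure of $\{d_2(m)\}_{m\ge 2}$ at the fifth iterate matches the paper's computation, and $\gamma_\ell\in(\ell-1,\ell)$ matches the bounds of Theorems \ref{thm:rulcm} and \ref{thm:lbdlm2}), and your framework explains \emph{why} the threshold should be $\ell\ge 3$, but the proposal remains a research program, not a proof.
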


The next two conjectures are on log-concavity and extended reverse ultra log-concavity of the sequences $\{d_\ell(m-1)d_\ell(m+1)/d_\ell^2(m)\}_{m\ge \ell+1}$.
\begin{conjecture}
For each $\ell\ge 1$, the sequence $\{d_\ell(m-1)d_\ell(m+1)/d_\ell^2(m)\}_{m\ge \ell+1}$ is log-concave.
\end{conjecture}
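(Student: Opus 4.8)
The plan is to recast the asserted log-concavity as a fourth-order log-concavity statement for $d_\ell$ itself, and then attack it with the ratio machinery of Section~\ref{Sec:erulcv}. Write $\rho_m=d_\ell(m-1)d_\ell(m+1)/d_\ell^2(m)$ and $L_m=\log d_\ell(m)$. Since $\log\rho_m=L_{m-1}-2L_m+L_{m+1}$ is the centered second difference of $L$, the desired log-concavity $\rho_m^2\ge\rho_{m-1}\rho_{m+1}$ is precisely the statement that the fourth finite difference of $L$ is nonpositive, i.e.
\begin{equation*}
d_\ell^4(m-1)\,d_\ell^4(m+1)\ \ge\ d_\ell^6(m)\,d_\ell(m-2)\,d_\ell(m+2),\qquad m\ge\ell+2.
\end{equation*}
Equivalently, writing $u_m=d_\ell(m+1)/d_\ell(m)$ and noting $\rho_m=u_m/u_{m-1}$, the goal is the compact inequality
\begin{equation*}
u_m^3\,u_{m-2}\ \ge\ u_{m-1}^3\,u_{m+1},\qquad m\ge\ell+2.
\end{equation*}
I would work with this last form, since the bounds established in the paper are all phrased through the ratios $u_m$.

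First I would eliminate all but one ratio using the three-term recurrence \eqref{eq:rec-dlm-m}. Dividing that recurrence by $d_\ell(m)$ turns it into a Riccati-type relation expressing $u_m$ as an explicit rational function of $u_{m-1}$, $m$ and $\ell$; iterating it forward and solving it backward writes each of $u_{m-2}$, $u_m$ and $u_{m+1}$ as a rational function of the single ratio $x=u_{m-1}$ with coefficients in $\mathbb{Q}(m,\ell)$. Clearing denominators then converts the target into one polynomial inequality $\Phi(x;m,\ell)\ge 0$ of bounded degree in $x$. To close the argument I would confine $x$ to a narrow interval using the sharp estimates of the paper: the lower bound $U(\ell,m)$ on $d_\ell(m+1)/d_\ell(m)$ from \eqref{ineq:lb-dlm1m} together with the tight window for $d_\ell^2(m)/(d_\ell(m-1)d_\ell(m+1))=u_{m-1}/u_m$ trapped between \eqref{ineq:ubdlm2} and \eqref{ineq:lbdlm2}, which pin $u_{m-1}$ and $u_m$ from both sides. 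This would reduce $\Phi(x;m,\ell)\ge 0$ to a polynomial positivity statement in $m$ and $\ell$ of the kind certified throughout Section~\ref{Sec:erulcv} by sign-of-coefficients and Descartes' rule arguments.

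The hard part, and the reason the statement is only conjectured, is that the curvature to be detected lies an order of magnitude below the resolution of the known bounds. Expanding the two bounds \eqref{ineq:ubdlm2} and \eqref{ineq:lbdlm2} gives
\begin{equation*}
\frac{d_\ell^2(m)}{d_\ell(m-1)d_\ell(m+1)}=1+\frac{c_\ell}{m^2}+O(m^{-3}),\qquad c_\ell\in[\ell-1,\ell],
\end{equation*}
so that $\log\rho_m\sim-c_\ell/m^2$ and the quantity whose sign governs the conjecture, the second difference of $\log\rho_m$, is only of size $\sim-6c_\ell/m^4$. By contrast the gap $D_2\sim m^{-2}$ between \eqref{ineq:ubdlm2} and \eqref{ineq:lbdlm2} pins $c_\ell$ down only to an interval of length one: for $\ell\ge 2$ this at least fixes the leading sign but leaves the $m^{-4}$ term undetermined, while for $\ell=1$ it fails even to separate $c_1$ from $0$. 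Feeding these bounds into $\Phi(x;m,\ell)$ therefore leaves its sign genuinely ambiguous. The crux is to manufacture matching upper and lower bounds on $u_m$ agreeing to relative order $o(m^{-4})$ and uniformly in $\ell$; I would attempt this by bootstrapping the Riccati relation, reinserting the first-order approximation of $u_m$ to extract the next terms of its asymptotic (continued-fraction) expansion, exactly as the induction of Theorem~\ref{thm:dl1mm>} sharpens a leading estimate. It is the uniform-in-$\ell$ error control of this expansion, together with the very high degree of the resulting polynomial certificate, where I expect essentially all the difficulty to concentrate.
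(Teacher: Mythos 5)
Your reformulation is correct and your diagnosis is accurate, but what you have written is a research plan, not a proof --- and indeed this statement is posed in the paper as an \emph{open conjecture}, with no proof offered there either. The reductions you make are sound: with $u_m=d_\ell(m+1)/d_\ell(m)$ one has $\rho_m=u_m/u_{m-1}$, log-concavity of $\{\rho_m\}$ is exactly $u_m^3u_{m-2}\ge u_{m-1}^3u_{m+1}$, and since \eqref{eq:rec-dlm-m} makes each of $u_{m-2}$, $u_m$, $u_{m+1}$ a M\"obius function of $x=u_{m-1}$, the target does collapse to a single polynomial inequality $\Phi(x;m,\ell)\ge 0$. Your error analysis is also right: the bounds \eqref{ineq:ubdlm2} and \eqref{ineq:lbdlm2} confine $1/\rho_m=1+c_\ell/m^2+O(m^{-3})$ only to $c_\ell\in[\ell-1,\ell]$, a window of width $\sim m^{-2}$, whereas the quantity whose sign the conjecture asserts is a second difference of $\log\rho_m$ of size $\sim m^{-4}$; pointwise bounds that coarse cannot control differences (the $O(m^{-3})$ errors alone could contribute $O(m^{-3})$ to a second difference, swamping the signal even when $\ell\ge 2$ fixes the sign of $c_\ell$).

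The genuine gap is that the crux you correctly identify --- two-sided bounds on $u_m$ with relative error $o(m^{-4})$, uniform in $\ell$ --- is never constructed. The proposed remedy, ``bootstrapping the Riccati relation'' to extract further terms of the asymptotic expansion of $u_m$, is only named, not carried out: no candidate bounding functions are exhibited, no induction in the style of Theorem \ref{thm:dl1mm>} is set up for them, and no positivity certificate for the resulting $\Phi$ is produced. Note also a structural mismatch your plan inherits: the induction behind Theorem \ref{thm:dl1mm>} succeeds because a \emph{one-sided} bound with $O(m^{-2})$-level accuracy suffices there, while here you need \emph{matching two-sided} bounds two orders sharper, which would have to encode the next two coefficients of the expansion of $u_m$ exactly --- nothing in the paper's toolkit (sign-of-coefficient and Descartes-rule certificates for fixed algebraic bounding expressions) is shown to reach that precision, and for $\ell=1$ your own analysis shows the method cannot even separate $c_1$ from $0$. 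So the statement remains open; your contribution is a correct reduction and a correct explanation of why the paper's bounds cannot settle it, not a proof.
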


\begin{conjecture}
For each $\ell\ge 0$, the sequence $\{d_\ell(m-1)d_\ell(m+1)/d_\ell^2(m)\}_{m\ge \ell+1}$ is extended reverse ultra log-concave.
\end{conjecture}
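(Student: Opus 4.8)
The plan is to reduce the conjecture to a sufficiently sharp two-sided control of the ratio $t_m:=d_\ell(m+1)/d_\ell(m)$, in the spirit of the proofs of Theorems \ref{thm:rulcm} and \ref{thm:lbdlm2}. Write $R_\ell(m)=d_\ell^2(m)/(d_\ell(m-1)d_\ell(m+1))$, so that the sequence in question is $\{1/R_\ell(m)\}_{m\ge \ell+1}$, with starting index $\ell+1$. A direct computation gives
\begin{align*}
\frac{\binom{m}{\ell+1}^2}{\binom{m-1}{\ell+1}\binom{m+1}{\ell+1}}
=\frac{m(m-\ell)}{(m+1)(m-\ell-1)},
\end{align*}
so the extended reverse ultra log-concavity to be proved is exactly the statement that, for $m\ge \ell+2$,
\begin{align*}
\frac{R_\ell(m-1)\,R_\ell(m+1)}{R_\ell^2(m)}
\le \frac{m(m-\ell)}{(m+1)(m-\ell-1)}.
\end{align*}
Since $R_\ell(m)=t_{m-1}/t_m$, the left-hand side collapses to $\rho_m/\rho_{m-1}$ with $\rho_m=t_m^2/(t_{m-1}t_{m+1})$; thus the conjecture is, at heart, a second-order (ratio-log-concavity type) inequality for the ratio sequence $\{t_m\}_{m\ge\ell}$.

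First I would turn this into a single-variable problem. Replacing $m$ by $m\pm 1$ in the three-term recurrence \eqref{eq:rec-dlm-m} expresses both $d_\ell(m-2)$ and $d_\ell(m+2)$ as explicit linear combinations of $d_\ell(m-1),d_\ell(m),d_\ell(m+1)$, while \eqref{eq:rec-dlm-m} itself renders $d_\ell(m-1)/d_\ell(m)$ an explicit affine function of $t_m$. Substituting all of this into the displayed inequality and clearing denominators reduces it, for each fixed pair $(\ell,m)$, to a polynomial inequality $g_{\ell,m}(t_m)\ge 0$ of bounded degree in the single variable $t_m$, whose coefficients are explicit polynomials in $m$ and $\ell$. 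This is precisely the device by which \eqref{ineq:ubdlm2} and \eqref{ineq:lbdlm2} were recast as the quadratic inequalities in $d_{\ell+1}(m)/d_\ell(m)$ and $d_\ell(m+1)/d_\ell(m)$ appearing in the proofs above.

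It then remains to certify that $g_{\ell,m}(t)\ge 0$ throughout the interval in which $t_m$ is known to lie. The lower bound \eqref{ineq:lb-dlm1m} for $d_\ell(m+1)/d_\ell(m)$ due to Zhao, a companion upper bound obtained by the same quadratic method, and the two-sided estimates on $R_\ell(m)$ recorded in \eqref{ineq:ubdlm2}--\eqref{ineq:lbdlm2}, together confine $t_m$ to an explicit interval $(\underline{t},\overline{t})$. One would then show that $g_{\ell,m}$ has no root there, by locating its real roots through discriminant computations of the same flavour as those performed for $\Delta_1$, $\Delta_2$ and $f_\ell(m)$. The boundary case $\ell=0$ (where $\{d_0(m)\}$ is log-convex and $R_0(m)<1$) and a finite range of small $m$ would be dispatched separately, the latter by direct evaluation from \eqref{eq:B-M-seq}.

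The hard part will be the sharpness demanded of the bounds on $t_m$. A heuristic estimate shows $R_\ell(m)=1+a_\ell/m^2+O(m^{-3})$ for a constant $a_\ell$, so the second difference $R_\ell(m-1)R_\ell(m+1)/R_\ell^2(m)$ equals $1+6a_\ell/m^4+O(m^{-5})$, comfortably below the right-hand side $1+(\ell+1)/m^2+O(m^{-3})$; the true inequality thus holds with an ample $O(m^{-2})$ margin. The difficulty is that this margin rests on a delicate cancellation among $R_\ell(m-1)$, $R_\ell(m)$ and $R_\ell(m+1)$, which is destroyed if one bounds numerator and denominator separately: the naive sufficient condition obtained by inserting the upper bound \eqref{ineq:ubdlm2} into the numerator and the lower bound \eqref{ineq:lbdlm2} into the denominator already fails for $\ell=0$ and is only borderline for $\ell=1$, precisely because the gap $D_2$ between those bounds is itself of order $m^{-2}$. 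To preserve the cancellation one must first establish genuinely sharper two-sided estimates for $t_m$---sharp to order $m^{-3}$ or finer and of matched functional form---most plausibly via a refined induction on $m$ modelled on Theorem \ref{thm:dl1mm>}. Carrying out that induction, and then controlling the resulting high-degree discriminant algebra over the narrow admissible interval for $t_m$, is where I expect the genuine obstacle to lie.
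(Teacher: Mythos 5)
The statement you were asked to prove is not a theorem of the paper but one of its concluding conjectures (Section~\ref{Sec:Conj}); the paper contains no proof of it, so there is nothing on the author's side to compare your argument against. More importantly, your submission is not a proof either: it is a correct reformulation followed by a research program whose decisive steps are explicitly deferred, and you say yourself that the ``genuine obstacle'' is left standing. What you do carry out is sound. The computation
$\binom{m}{\ell+1}^2/\bigl(\binom{m-1}{\ell+1}\binom{m+1}{\ell+1}\bigr)=m(m-\ell)/\bigl((m+1)(m-\ell-1)\bigr)$
is correct; so are the identity $R_\ell(m)=t_{m-1}/t_m$, the resulting formulation of the conjecture as
$R_\ell(m-1)R_\ell(m+1)/R_\ell^2(m)\le m(m-\ell)/\bigl((m+1)(m-\ell-1)\bigr)$ for $m\ge \ell+2$,
and the observation that iterating \eqref{eq:rec-dlm-m} reduces this, for fixed $(\ell,m)$, to a one-variable polynomial inequality in $t_m=d_\ell(m+1)/d_\ell(m)$. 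Your quantitative diagnosis of the naive attack is also accurate: inserting \eqref{ineq:ubdlm2} into $R_\ell(m\pm 1)$ and \eqref{ineq:lbdlm2} into $R_\ell(m)$ bounds the left-hand side by roughly $1+2/m^2$, against a right-hand side of $1+(\ell+1)/m^2+O(m^{-3})$, so that route fails for $\ell=0$, is borderline for $\ell=1$, and has leading-order room only for $\ell\ge 2$.

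The gap is that the two steps which would constitute the actual proof are never executed. First, the ``genuinely sharper two-sided estimates for $t_m$'' (sharp to order $m^{-3}$ and of matched functional form) that your own analysis shows to be necessary for $\ell\in\{0,1\}$ are not produced: no candidate bounds are written down, no induction in the style of Theorem~\ref{thm:dl1mm>} is attempted, and nothing in the paper supplies them --- Theorem~\ref{thm:dl1mm>} bounds the different ratio $d_{\ell+1}(m)/d_\ell(m)$, and Zhao's bound \eqref{ineq:lb-dlm1m} is one-sided. Second, the root-location/discriminant analysis of the polynomial $g_{\ell,m}$ over the admissible interval for $t_m$ is never performed, so even the reduction you set up is not exploited. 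A constructive remark: your leading-order computation suggests that for $\ell\ge 2$ the conjecture may already follow from Theorems~\ref{thm:rulcm} and \ref{thm:lbdlm2} alone, since the naive insertion reduces it to the explicit polynomial inequality
\begin{align*}
(m-\ell)^2(m-\ell+2)(m-1)(m+1)^4(m^2+1)^2\le (m-\ell+1)^3(m+2)\,m^8,
\qquad m\ge \ell+2,
\end{align*}
which numerical checks indicate holds in that range; verifying it rigorously would convert your sketch into a genuine partial result, leaving only $\ell\in\{0,1\}$ open. As it stands, however, the conjecture remains unproved for every $\ell$.
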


Numerical experiments suggest that the ratio sequence $\{d_\ell(m)/d_{\ell-1}(m)\}_{1\le \ell\le m}$ is neither log-concave nor log-convex. This sequence may have a distinctive log-behavior with half log-convex and half log-concave.

\begin{conjecture}\label{conj:hlc-hlv}
For $m\ge 3$, the sequence $\{d_\ell(m)/d_{\ell-1}(m)\}_{1\le \ell \le \lfloor m/2 \rfloor+1}$ is log-convex, and the sequence $\{d_\ell(m)/d_{\ell-1}(m)\}_{\lfloor m/2 \rfloor\le \ell \le m}$ is log-concave. That is,
\begin{align}\label{ineq:rlcv-l}
\left(\frac{d_\ell(m)}{d_{\ell-1}(m)}\right)^2
< \left(\frac{d_{\ell-1}(m)}{d_{\ell-2}(m)}\right)
  \left(\frac{d_{\ell+1}(m)}{d_\ell(m)}\right)
\end{align}
holds for $2\le \ell \le \lfloor m/2 \rfloor$, and the inequalities in \eqref{ineq:rlcv-l}
reverse for $\lfloor m/2 \rfloor +1\le \ell \le m-1$.
\end{conjecture}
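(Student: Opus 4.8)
Fix $m\ge 3$ and write $r_\ell=d_\ell(m)/d_{\ell-1}(m)$ for $1\le \ell\le m$ and $Q_\ell=d_\ell^2(m)/(d_{\ell-1}(m)d_{\ell+1}(m))$ for $1\le \ell\le m-1$, so that $Q_\ell=r_\ell/r_{\ell+1}$. The inequality \eqref{ineq:rlcv-l} reads $r_\ell^2<r_{\ell-1}r_{\ell+1}$, which is precisely $Q_\ell<Q_{\ell-1}$; hence Conjecture \ref{conj:hlc-hlv} asserts that the single sequence $\{Q_\ell\}_{1\le\ell\le m-1}$ is strictly decreasing for $2\le\ell\le\lfloor m/2\rfloor$ and strictly increasing for $\lfloor m/2\rfloor+1\le\ell\le m-1$, i.e. that $Q_\ell$ is ``valley-shaped'' with its unique minimum at $\ell=\lfloor m/2\rfloor$. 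The first thing I would record is why this threshold is natural: the Chen--Gu upper bound \eqref{eq:ub-mell2} simplifies to
\[
\frac{(m-\ell+1)(\ell+1)}{(m-\ell)\ell}=1+\frac{m+1}{\ell(m-\ell)},
\]
which, as a function of $\ell$, is itself valley-shaped with minimum at $\ell=m/2$. Since $Q_\ell$ is squeezed between this bound and the lower bound \eqref{eq:lb-mell2} (which differs from it only by the small quantity $D_1$), the conjectured shape of $Q_\ell$ is exactly the shape of its envelope.

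The engine of the proof should be the ``horizontal'' recurrence \eqref{eq:rec-i-ho}, which relates three consecutive $d_\ell(m)$ at fixed $m$. Dividing \eqref{eq:rec-i-ho} by $d_{\ell-1}(m)$, and dividing its $\ell\mapsto\ell+1$ shift by $d_\ell(m)$, gives
\begin{align*}
r_{\ell-1}&=\frac{(m+2-\ell)(m+\ell-1)}{(\ell-1)\big[(2m+1)-\ell r_\ell\big]},\\
r_{\ell+1}&=\frac{(2m+1)\ell r_\ell-(m+1-\ell)(m+\ell)}{\ell(\ell+1)r_\ell}.
\end{align*}
Both $r_{\ell-1}$ and $r_{\ell+1}$ are thereby expressed as explicit rational functions of the single variable $R:=r_\ell$. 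Since $(2m+1)-\ell R>0$ (forced by $r_{\ell-1}>0$ for $2\le\ell\le m-1$), clearing the positive common denominator turns $r_{\ell-1}r_{\ell+1}-r_\ell^2>0$ into the quartic inequality $\Phi(R)>0$, where
\begin{align*}
\Phi(R)={}&(\ell-1)\ell^2(\ell+1)R^4-(2m+1)(\ell-1)\ell(\ell+1)R^3\\
&{}+(2m+1)\ell(m+2-\ell)(m+\ell-1)R\\
&{}-(m+2-\ell)(m+\ell-1)(m+1-\ell)(m+\ell).
\end{align*}
Thus \eqref{ineq:rlcv-l} holds exactly when $\Phi(r_\ell)>0$ and reverses exactly when $\Phi(r_\ell)<0$; the problem is reduced to locating $r_\ell$ relative to the real roots of $\Phi$.

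To control $r_\ell$ I would feed in the sharp two-sided bounds already available for this ratio: the lower bound $r_\ell>W(\ell-1,m)$ coming from Theorem \ref{thm:dl1mm>}, and the upper bound $r_\ell<(m-\ell+1)/\ell$ coming from the $\ell\mapsto\ell-1$ shift of Chen and Xia \cite[Lemma 3.1]{Chen-Xia2009}. The plan is then to split the range of $\ell$ into two regimes. For $\ell$ bounded away from $m/2$, the crude Chen--Gu bounds already suffice: writing $B_\ell$ and $L_\ell$ for the bounds in \eqref{eq:ub-mell2} and \eqref{eq:lb-mell2}, one shows the elementary rational inequality $B_\ell\le L_{\ell-1}$ on the decreasing side (so that $Q_\ell<B_\ell\le L_{\ell-1}<Q_{\ell-1}$) and $B_{\ell-1}\le L_\ell$ on the increasing side. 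For $\ell$ in the transition window $|2\ell-m|=O(1)$ one substitutes the sharp bounds $W(\ell-1,m)<r_\ell<(m-\ell+1)/\ell$ into $\Phi$ and checks, by the same discriminant and sign-of-resultant bookkeeping used in the proof of Theorem \ref{thm:dl1mm>}, that $\Phi$ keeps a fixed sign on the whole interval $\big(W(\ell-1,m),(m-\ell+1)/\ell\big)$, this sign being $+$ for $\ell\le\lfloor m/2\rfloor$ and $-$ for $\ell\ge\lfloor m/2\rfloor+1$.

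I expect the transition window to be the entire difficulty. Away from the centre the envelope $B_\ell$ separates cleanly from $L_{\ell-1}$, but near $\ell=m/2$ the gap $D_1$ between the two Chen--Gu bounds is of the same order as the increment $|Q_\ell-Q_{\ell-1}|$, so no combination of those bounds can decide the sign; one is forced to resolve the sub-leading behaviour of $Q_\ell$ itself, which is where the sharp bound $W$ and a careful quartic sign analysis must carry the load. The parity of $m$ is a second, related, subtlety: for odd $m$ the continuous envelope is symmetric about $m/2$, since $B_{(m-1)/2}=B_{(m+1)/2}$, so it cannot by itself distinguish $Q_{(m-1)/2}$ from $Q_{(m+1)/2}$. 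Deciding that the minimum sits at $\lfloor m/2\rfloor=(m-1)/2$ rather than at $(m+1)/2$ genuinely requires the finer structure of $d_\ell(m)$, and I would therefore treat the even and odd cases separately at the very end.
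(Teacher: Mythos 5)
You should first be aware of the status of this statement: it is Conjecture~\ref{conj:hlc-hlv} of the paper, which the paper does \emph{not} prove --- it offers only numerical verification for $3\le m\le 200$. So there is no proof of record to compare yours against, and a completed argument would be a new result. What you have written, however, is a strategy rather than a proof, and by your own admission the decisive step is deferred. The parts that are correct: with $r_\ell=d_\ell(m)/d_{\ell-1}(m)$ and $Q_\ell=d_\ell^2(m)/(d_{\ell-1}(m)d_{\ell+1}(m))=r_\ell/r_{\ell+1}$, inequality \eqref{ineq:rlcv-l} is indeed equivalent to $Q_\ell<Q_{\ell-1}$, and your reduction of $r_{\ell-1}r_{\ell+1}-r_\ell^2>0$ to the quartic inequality $\Phi(r_\ell)>0$ via the recurrence \eqref{eq:rec-i-ho} (using $(2m+1)-\ell r_\ell>0$) is algebraically sound.

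The gaps are quantitative and fatal to the plan as stated. \textbf{(a)} The ``easy'' regime is far smaller than you claim. Writing $B_\ell=1+\frac{m+1}{\ell(m-\ell)}$ and $L_\ell=B_\ell\cdot\frac{m+\ell}{m+\ell+1}$ for the Chen--Gu bounds \eqref{eq:ub-mell2} and \eqref{eq:lb-mell2}, the separation $B_\ell\le L_{\ell-1}$ that you need is equivalent to
\begin{align*}
(m+1)(m+1-2\ell)(m+\ell-1)\ \ge\ \ell(\ell-1)(m-\ell)(m-\ell+1)\,B_\ell,
\end{align*}
whose left side is $\Theta(m^3)$ and right side $\Theta(m^4)$ when $\ell=\alpha m$ with $\alpha\in(0,1/2)$ fixed; it therefore fails for all large $m$ and holds only when $\ell=O(\sqrt{m}\,)$ or $m-\ell=O(\sqrt{m}\,)$. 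Concretely, for $m=100$ it already fails at $\ell=11$: $B_{11}=1+\tfrac{101}{979}\approx 1.1032$ while $L_{10}=\tfrac{91\cdot 11\cdot 110}{90\cdot 10\cdot 111}\approx 1.1022$. So the region requiring finer analysis is essentially the whole range, not a window $|2\ell-m|=O(1)$. \textbf{(b)} In that region the finer method you outline cannot succeed. The information you propose to use confines $r_\ell$ to the interval $I_\ell=\bigl(W(\ell-1,m),\,(m-\ell+1)/\ell\bigr)$, whose width is $\Theta(1/m)$ (e.g.\ the analogous interval for $d_{26}(100)/d_{25}(100)$ is about $(2.833,\,2.885)$). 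On the other hand, $\Phi(r_\ell)=\bigl(r_{\ell-1}r_{\ell+1}-r_\ell^2\bigr)\cdot(\ell-1)\ell(\ell+1)\,r_\ell\,\bigl[(2m+1)-\ell r_\ell\bigr]$, where the second factor and $\Phi'$ are both $\Theta(m^4)$ on $I_\ell$, while $|r_{\ell-1}r_{\ell+1}-r_\ell^2|$ is only $O(1/m^2)$ in the bulk and $O(1/m^3)$ near $\ell=m/2$, since consecutive $Q_\ell$'s differ by $O((|2\ell-m|+1)/m^3)$. Hence $|\Phi(r_\ell)|=O(m^2)$ (resp.\ $O(m)$ at the centre), so $r_\ell$ lies within $O(1/m^2)$ (resp.\ $O(1/m^3)$) of a real zero of $\Phi$. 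For $\Phi$ to keep a fixed sign on all of $I_\ell$, that zero must fall outside $I_\ell$, i.e.\ one of your two bounds must approximate $r_\ell$ to accuracy $O(1/m^2)$ (resp.\ $O(1/m^3)$). Nothing of the sort is known: the bounds' combined gap is $\Theta(1/m)$, so they cannot both be that sharp, and no mechanism in the derivation of $W$ (Theorem~\ref{thm:dl1mm>}) or of the Chen--Xia bound delivers such sharpness for either one. Your strategy thus silently replaces the conjecture by an unproven ratio-sharpness statement that is two orders of magnitude (in $1/m$) beyond every estimate available in the paper; resolving that precision deficit --- including the odd-$m$ centre, where even $O(1/m^3)$ accuracy is needed --- is the actual content of the problem, and it is left untouched.
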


Clearly, Conjecture \ref{conj:hlc-hlv} is equivalent to that $r_\ell(m)<1$ for $2\le \ell \le \lfloor m/2 \rfloor$ and $r_\ell(m)>1$ for $\lfloor m/2 \rfloor +1\le \ell \le m-1$, where
\begin{align}
r_\ell(m)=\frac{d_\ell^3(m) d_{\ell-2}(m)}{d_{\ell-1}^3(m) d_{\ell+1}(m)}.
\end{align}

We have verified Conjecture \ref{conj:hlc-hlv} for $3\le m\le 200$.
For example, $\left\{r_\ell(9)\right\}_{2\le \ell \le 8}$ is given by
\begin{align*}
&r_2(9)=\frac{60275815334620606439322}{78173355142115765635889},\qquad r_3(9)=\frac{122118613523526671413768}{133528261319822227027923},\\[7pt] &r_4(9)=\frac{135495563425805832093}{139776208550739676384},\qquad
r_5(9)=\frac{2512968603767684}{2503881674347833},\ \
\\[7pt]
&r_6(9)=\frac{3844942434909}{3698150303624},\qquad r_7(9)=\frac{2672864807424}{2420889681239},\qquad
r_8(9)=\frac{3879265207}{2951578112}.
\end{align*}
We see that
$$
r_2(9)<1,\ \ r_3(9)<1,\ \ r_4(9)<1,\ \ r_5(9)>1,\ \ r_6(9)>1,\ \ r_7(9)>1,\ \ r_8(9)>1.
$$

Chen, Guo and Wang \cite{CGW2014} introduced the notion of \emph{infinitely log-monotonic}. For a sequence $\{a_i\}_{i\ge 0}$ of real numbers, define an operator $\mathcal{R}$ by $\mathcal{R}\{a_i\}_{i\ge 0}=\{b_i\}_{i\ge 0}$, where $b_i=a_{i+1}/a_i$ for $i\ge 0$. A sequence $\{a_i\}_{i\ge 0}$ is called log-monotonic of order $k$ if for $j$ odd and not exceeds $k-1$, the sequence $\mathcal{R}^j\{a_i\}_{i\ge 0}$ is log-concave and for $j$ even and not exceeds $k-1$, the sequence $\mathcal{R}^j\{a_i\}_{i\ge 0}$ is log-convex. A sequence $\{a_i\}_{i\ge 0}$ is called \emph{infinitely log-monotonic} if it is log-monotonic of order $k$ for all integers $k\ge 1$. By applying the log-behavior of the Riemann zeta function and the gamma function, Chen {\it et al}. \cite{CGW2014} also showed the infinite log-monotonicity of the Bernoulli numbers, the Catalan numbers and the central binomial coefficients.

The transposed Boros-Moll sequence $\{d_0(m)\}_{m\ge 0}$ was proved to be log-convex in \cite[Theorem 3.1]{Jiang-Wang2024} and is easily checked to be ratio log-concave, and hence is log-monotonic of order $2$. Further numerical experiments suggests the following conjecture.

\begin{conjecture}
The transposed Boros-Moll sequence $\{d_\ell(m)\}_{m\ge \ell}$ is infinitely log-monotonic for $\ell=0$. The sequence $\{d_\ell(m+1)/d_\ell(m)\}_{m\ge \ell}$ is infinite log-monotonic for each $\ell\ge 1$.
\end{conjecture}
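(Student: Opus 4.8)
The plan is to reduce both assertions to a single analytic principle in the spirit of Chen, Guo and Wang \cite{CGW2014}: if $\{a_n\}$ is a positive sequence admitting a smooth interpolant $a(x)$ for which $(\log a)''$ is \emph{completely monotonic} on a half-line, then $\{a_n\}$ is infinitely log-monotonic. To see why this is the right criterion, note that $\log\big(\mathcal{R}^j\{a_i\}\big)(n)=\Delta^j\phi(n)$, where $\phi=\log a$ and $\Delta$ is the forward difference; hence $\mathcal{R}^j\{a_i\}$ is log-convex (resp. log-concave) exactly when $(-1)^j\Delta^{j+2}\phi\ge 0$. Thus $\{a_n\}$ is infinitely log-monotonic if and only if the sequence $\{\Delta^2\phi(n)\}$ is completely monotonic, and the latter follows from complete monotonicity of $(\log a)''$ by writing $\Delta^2\phi(n)=\int_{[0,1]^2}(\log a)''(n+u+v)\,du\,dv$ and invoking Bernstein's theorem. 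First I would record this reduction as a lemma, since it governs both regimes uniformly.

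For $\ell=0$ the interpolant is explicit. Setting $x=0$ in the quartic-integral identity and evaluating $\int_0^\infty(1+t^4)^{-(m+1)}\,dt$ by the substitution $u=t^4$ together with the Beta integral yields
\begin{align*}
d_0(m)=\frac{\Gamma(1/4)}{\sqrt{2}\,\pi}\cdot 2^{m}\,\frac{\Gamma(m+3/4)}{\Gamma(m+1)}.
\end{align*}
Since constant factors and the linear term $m\log 2$ vanish under two differentiations, the criterion requires only the complete monotonicity of
\begin{align*}
(\log d_0)''(x)=\psi'(x+3/4)-\psi'(x+1),
\end{align*}
where $\psi'$ is the trigamma function. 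Using $\psi'(x)=\int_0^\infty \tfrac{t}{1-e^{-t}}e^{-xt}\,dt$, this difference equals $\int_0^\infty \tfrac{t\,(e^{-3t/4}-e^{-t})}{1-e^{-t}}\,e^{-xt}\,dt$, whose kernel is nonnegative because $3/4<1$; complete monotonicity is then immediate from Bernstein's theorem, and the lemma settles the $\ell=0$ statement.

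For $\ell\ge 1$ the target is the ratio sequence $g_\ell(m)=d_\ell(m+1)/d_\ell(m)$. Applying the lemma to $a=g_\ell$ and using $(\log g_\ell)''(x)=(\log d_\ell)''(x+1)-(\log d_\ell)''(x)$, it suffices to prove that $(\log d_\ell)'''$ is completely monotonic; its leading inequality $(\log d_\ell)'''\ge 0$ is the continuous shadow of the ratio-log-convexity of $\{d_\ell(m)\}_{m\ge\ell}$ already established by Zhang and Zhao \cite{Zhang-Zhao}, so what is genuinely new is the full alternating tower. The main obstacle is that, unlike the case $\ell=0$, there is no gamma-type closed form for $d_\ell(m)$ when $\ell\ge 1$, so one must manufacture an analytic representation of $g_\ell$. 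Two routes seem viable. The first is to turn the three-term recurrence \eqref{eq:rec-dlm-m} into a Riccati recurrence for $g_\ell$, namely
\begin{align*}
4(m^2+m)(m+1-\ell)\,g_\ell(m)
=2m(8m^2+8m-4\ell^2+3)-\frac{(16m^2-1)(m+\ell)}{g_\ell(m-1)},
\end{align*}
and to show that its continued-fraction expansion is of Stieltjes type, which would exhibit $g_\ell$ as a Stieltjes transform and thereby force complete monotonicity of the relevant logarithmic derivatives. The second is to differentiate the quartic-integral identity $\ell$ times in $x$ at $x=0$, producing an explicit—if unwieldy—integral representation of $d_\ell(m)$ from which the sign pattern of $(\log d_\ell)'''$ could be read off uniformly in $m$.

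I expect this analytic step—establishing complete monotonicity uniformly in $m\ge\ell$ and in all difference orders, rather than merely the finitely many inequalities supplied by the sharp bounds of Theorems \ref{thm:rulcm}, \ref{thm:lbdlm2} and \ref{thm:dl1mm>}—to be the crux of the proof. The sign asymmetry between the two parts (for $\ell=0$ one has $(\log d_0)'''\le 0$, so the \emph{ratio} sequence is not infinitely log-monotonic, whereas for $\ell\ge 1$ one expects $(\log d_\ell)'''\ge 0$) is precisely what prevents a single closed-form argument from covering both cases, and explains why the conjecture separates the two regimes.
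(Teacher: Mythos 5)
The statement you are addressing is one of the paper's concluding conjectures: the paper offers no proof of it, only numerical evidence together with the remark (via \cite{Jiang-Wang2024}) that $\{d_0(m)\}_{m\ge 0}$ is log-monotonic of order $2$. So there is no proof of the paper to compare against, and your proposal must be judged on its own. Your reduction lemma is correct, and it is precisely the criterion of Chen, Guo and Wang \cite{CGW2014}: with $\phi=\log a$, infinite log-monotonicity of $\{a_n\}$ amounts to complete monotonicity of the sequence $\{\Delta^2\phi(n)\}$, which follows from complete monotonicity of $(\log a)''$ for a smooth interpolant via $\Delta^2\phi(n)=\int_0^1\!\int_0^1 (\log a)''(n+u+v)\,du\,dv$ and Bernstein's theorem. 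Your treatment of $\ell=0$ is, as far as I can check, a complete and correct proof of the first half of the conjecture: the closed form $d_0(m)=\tfrac{\Gamma(1/4)}{\sqrt{2}\,\pi}\,2^m\,\Gamma(m+3/4)/\Gamma(m+1)$ is right (it follows from the Beta-integral evaluation you describe and checks at $m=0,1$), the constant and the linear term $m\log 2$ vanish after two differentiations, and $\psi'(x+3/4)-\psi'(x+1)$ is completely monotonic by the trigamma integral representation since $e^{-3t/4}-e^{-t}>0$ for $t>0$. This part genuinely goes beyond what the paper establishes.

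The genuine gap is the second half, $\ell\ge 1$, and it is not a minor one: it is the entire content of that half. Your lemma requires a smooth interpolant $x\mapsto d_\ell(x)$ in the $m$-variable whose logarithm has a completely monotonic third derivative, and for $\ell\ge 1$ you neither construct such an interpolant nor prove anything about one; you only name two candidate routes. The first route is unsupported: the recurrence \eqref{eq:rec-dlm-m} has $m$-dependent polynomial coefficients, so it is not of the fixed-coefficient form to which Stieltjes continued-fraction theory applies directly, and no positivity of the would-be continued-fraction parameters is verified. The second route (differentiating the quartic integral $\ell$ times at $x=0$) yields a signed combination of Beta-type integrals, and you say nothing about how the sign of $(\log d_\ell)'''$, let alone of all its higher alternating differences, would be extracted from it uniformly in $m$. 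In particular, your proposal does not establish even the first inequality beyond the ratio-log-convexity of \cite{Zhang-Zhao}, namely the $j=1$ level $\Delta^4\log d_\ell(m)\le 0$ (log-concavity of $\mathcal{R}^2\{d_\ell(m)\}$), never mind the full alternating tower. So the proposal settles the $\ell=0$ assertion but leaves the $\ell\ge 1$ assertion exactly as open as the paper leaves it; your own closing remark about the ``crux'' is accurate, and that crux is missing.
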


\section*{Acknowledgments}
The author would like to thank Arthur L. B. Yang for valuable suggestion and comments for previous version.

\end{document}